\newtheorem{thm}{Theorem}[section]
\newtheorem{cor}[thm]{Corollary}
\newtheorem{claim}[thm]{Claim}
\newtheorem{fact}[thm]{Fact}
\newtheorem{lemma}[thm]{Lemma}
\newtheorem{prop}[thm]{Proposition}
\theoremstyle{definition}
\newtheorem{definition}[thm]{Definition}
\newtheorem{ex}[thm]{Example}
\newtheorem{remark}[thm]{Remark}
\newtheorem{question}[thm]{Question}
\def\rquotient#1#2{%
	\makeatletter
	\raise.3ex\hbox{$#1$}/\lower.3ex\hbox{$#2$}%
	\makeatother
}	
\newcommand{\subjclass}[2][2010]{%
	\let\@oldtitle\@title%
	\gdef\@title{\@oldtitle\footnotetext{#1 \emph{Mathematics subject classification.} #2}}%
}
\newcommand{\keywords}[1]{%
	\let\@@oldtitle\@title%
	\gdef\@title{\@@oldtitle\footnotetext{\emph{Key words and phrases.} #1.}}%
}
\newcommand{\Address}{{% additional braces for segregating \footnotesize
		\bigskip
		\small
		
		\textsc{D\'epartement de Math\'ematiques B\^atiment 307, Facult\'e des Sciences d'Orsay, Universit\'e Paris-Sud, F-91405 Orsay Cedex, France.}\par\nopagebreak
		\textit{E-mail address}: \texttt{anthony.genevois@math.u-psud.fr}
		
		\medskip
		
		\textsc{D\'epartement de Math\'ematiques, Facult\'e des Sciences, Aix-Marseille Universit\'e, 3 place Victor Hugo, 13331 Marseille cedex 3, France.}\par\nopagebreak
		\textit{E-mail address}: \texttt{arnaud.stocker@univ-amu.fr}
		
}}
\title{Partially CAT(-1) groups are acylindrically hyperbolic}
\date{\today}
\author{Anthony Genevois and Arnaud Stocker}
\subjclass{Primary 20F65. Secondary 20F67.}
\keywords{acylindrically hyperbolic groups, CAT(0) groups, rank-one isometries, groupes acylindriquement hyperboliques, groupes CAT(0), isom\'etries de rang un}
\begin{document}

\maketitle

\begin{abstract}
In this paper, we show that, if a group $G$ acts geometrically on a geodesically complete CAT(0) space $X$ which contains at least one point with a CAT(-1) neighborhood, then $G$ must be either virtually cyclic or acylindrically hyperbolic. As a consequence, the fundamental group of a compact Riemannian manifold whose sectional curvature is nonpositive everywhere and negative in at least one point is either virtually cyclic or acylindrically hyperbolic. This statement provides a precise interpretation of an idea expressed by Gromov in his paper \emph{Asymptotic invariants of infinite groups}.

\medskip
Dans cet article, nous d\'emontrons que, si un groupe $G$ agit g\'eom\'etriquement sur un espace CAT(0) $X$ qui est g\'eod\'esiquement complet et qui contient au moins un point admettant un voisinage CAT(-1), alors $G$ doit \^etre ou bien acylindriquement hyperbolique ou bien virtuellement cyclique. Par cons\'equent, le groupe fondamental d'une vari\'et\'e riemannienne compacte dont la courbure sectionnelle est n\'egative ou nulle partout et strictement n\'egative en au moins un point doit \^etre acylindriquement hyperbolique ou virtuellement cyclique. Cet \'enonc\'e propose une interpr\'etation pr\'ecise et moderne d'une id\'ee de Gromov d\'ecrite dans \emph{Asymptotic invariants of infinite groups}.  
\end{abstract}

\tableofcontents

\section{Introduction}

A key concept in geometric group theory is the notion of ``curvature''. Usually, exhibiting some negative curvature in the geometry of a given group provides interesting information on the algebraic properties of our group. The first instance of such a geometry in group theory was \emph{small cancellation} (see \cite{LS}, or \cite{McCammondWise} for a more geometric approach), next generalised by Gromov with the seminal concept of \emph{hyperbolic groups} \cite{GromovHyp}. There, Gromov also suggests the definition of \emph{relatively hyperbolic groups}, allowing some non hyperbolic subspaces but concentrating them into a controlled collection of subgroups. We refer to the survey \cite{HruskaRH} and references therein for more information on relatively hyperbolic groups and their historical development. More recently, Osin introduced \emph{acylindrically hyperbolic groups} \cite{OsinAcyl} in order to unify several classes of groups considered as ``negatively-curved''. 

Although acylindrically hyperbolic groups generalise relatively hyperbolic groups, many arguments used in the context of relatively hyperbolic groups turn out to hold in the context of acylindrically hyperbolic groups as well \cite{DGO}. One of the most impressive algebraic consequence is that acylindrically hyperbolic groups are SQ-universal. The first motivating examples of such groups were mapping class groups \cite{BBboundedcohoMCG} and outer automorphism groups of free groups \cite{BestvinaFeighnOutFnHyp}. However, since then a large amount of articles have been dedicated to the recognition of acylindrically hyperbolic groups among familiar classes of groups, including graph products \cite{MinasyanOsin}, 3-manifold groups \cite{MinasyanOsin}, Cremona groups \cite{LonjouCremonaAcylHyp}, small cancellation groups \cite{C(7), CubeSmallCancelAcylHyp, coningoff}, groups of deficiency at least two \cite{DeficiencyAcylHyp}, Artin groups \cite{CalvezWiestArtinAcylHyp, NoteAcylHyp}, diagram groups \cite{article3} and graph braid groups \cite{MoiGraphBraid}. So far, acylindrical hyperbolicity is the most general convincing notion of groups admitting some ``negatively-curved behaviour''.  

In this paper, we are interested in some ``nonpositively-curved'' groups, namely CAT(0) groups, ie., groups acting geometrically on CAT(0) spaces. These spaces generalise Riemannian manifolds whose curvature is everywhere nonpositive, also known as Hadamard manifolds. Loosely speaking, what we show is that a CAT(0) group which is ``partially hyperbolic'' turns out to be acylindrically hyperbolic. More precisely, the main result of the article is:

\begin{thm}\label{thm:mainintro}
Let $G$ be a group acting geometrically on some geodesically complete proper CAT(0) space $X$. Suppose that $X$ contains at least one point with a CAT(-1) neighborhood. Then $G$ must be either virtually cyclic or acylindrically hyperbolic.
\end{thm}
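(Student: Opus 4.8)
The plan is to produce a rank-one isometry of $G$ out of the CAT($-1$) point and then invoke the now-standard dichotomy for proper actions possessing a contracting element. First the degenerate cases: since the action is proper and cocompact, $G$ is finite exactly when $X$ is bounded and virtually $\mathbb{Z}$ exactly when $X$ is quasi-isometric to a line, and in both situations $G$ is virtually cyclic with nothing to prove. So assume from now on that $X$ is unbounded and not a quasi-line. Write $U$ for a CAT($-1$) neighborhood of the distinguished point $p$ and $\mathcal{U} := G\cdot U$, an open $G$-invariant subset of $X$. (If $\mathcal{U} = X$ then $X$ is locally CAT($-1$) and simply connected, hence globally CAT($-1$), so $G$ is a non-elementary hyperbolic group and we are done; the argument below subsumes this case.)

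Next I would check that $X$ contains a rank-one geodesic line. By geodesic completeness, extend a nontrivial geodesic segment with endpoint $p$ to a bi-infinite geodesic line $\ell$; then $p$ lies in the interior of $\ell$. I claim $\ell$ bounds no flat half-plane, hence is rank-one. Indeed, a flat half-plane $H$ with $\partial H = \ell$ is by definition a convex subset of $X$ isometric to a Euclidean half-plane; an interior point $q$ of $H$ lying close enough to $p$ belongs to $U$, and then a small Euclidean $2$-disc of $H$ centred at $q$ is a convex flat region contained in $U$. Its Euclidean geodesic triangles are then geodesic triangles of $X$ with angle sum $\pi$, which is impossible in the CAT($-1$) ball $U$, where $\mathbb{H}^2$-comparison forces a strictly smaller angle sum. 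The same argument shows that every geodesic of $X$ meeting $\mathcal{U}$ is rank-one.

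The heart of the proof is to upgrade $\ell$ to the axis of an element of $G$. Here I would run a closing-lemma argument in the spirit of Ballmann. Fix a compact set $K$ with $GK = X$ and parametrise $\ell$ so that $\ell(0) = p$. Properness of $X$, cocompactness, and compactness of the space of geodesic directions based in $K$ together allow a pigeonhole argument producing integers $m < 0 < n$ and an element $g \in G$ such that $g\,\ell(m)$, together with the image under $g$ of the direction of $\ell$ at $\ell(m)$, approximates $\ell(n)$ and the direction of $\ell$ at $\ell(n)$ as closely as one wishes. Concatenating the $g^k$-translates of $\ell|_{[m,n]}$ yields a $g$-invariant bi-infinite broken geodesic which fellow-travels a genuine $g$-axis $\alpha$; the negative curvature of $X$ along $\ell$ near $p$, together with convexity of the metric elsewhere, controls the breakpoints, and a fine enough approximation forces $\alpha$ to meet the open set $\mathcal{U}$. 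By the previous paragraph $\alpha$ is then rank-one, so $g$ is a rank-one isometry of $G$. (Alternatively, one may simply invoke the known fact that a geometric action on a geodesically complete proper CAT($0$) space containing a rank-one geodesic admits a rank-one isometry.)

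Finally, since the action is proper and $g$ is rank-one, $g$ is a contracting element in the sense of Bestvina and Fujiwara, so Sisto's criterion together with Osin's characterisation of acylindrical hyperbolicity shows that $G$ is virtually cyclic or acylindrically hyperbolic; as we have reduced to the non-virtually-cyclic case, $G$ is acylindrically hyperbolic. I expect the closing-lemma step to be the main obstacle: $X$ is only negatively curved near $p$ and may contain flats far from it, so one must carefully ensure that the periodic quasigeodesic built from $\ell$ stays uniformly close to an honest geodesic axis while still visiting $\mathcal{U}$. This is exactly where the CAT($-1$) hypothesis is needed, namely to inject the local hyperbolicity that makes the construction go through, geodesic completeness being what turns geodesics through $p$ into bi-infinite lines in the first place.
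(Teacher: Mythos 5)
Your reduction to the non-virtually-cyclic case, the Cartan--Hadamard remark, and the observation that a geodesic line through the CAT($-1$) point $p$ cannot bound a flat half-plane (a flat convex disc inside a CAT($-1$) ball would violate the $\mathbb{H}^2$-comparison inequality) are all correct, and your final step matches the criterion the paper actually uses (Theorem \ref{thm:whenacylhyp}). The genuine gap is the closing-lemma step, and it is not a technicality: it is exactly the content that Propositions \ref{prop1} and \ref{prop2} of the paper are built to supply. A geodesic line that passes through a single CAT($-1$) point is rank-one in the sense of bounding no flat half-plane, but it need not be contracting: away from $p$ it may run through arbitrarily large flat regions, and the space of geodesics not bounding a flat half-plane is not closed, so this property does not survive the limiting procedures your argument requires. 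Your claim that ``the negative curvature of $X$ along $\ell$ near $p$, together with convexity of the metric elsewhere, controls the breakpoints'' is precisely what fails: in a CAT(0) space a concatenation of long geodesic segments with arbitrarily small angle defects at the breakpoints need not be a quasi-geodesic (a polygonal spiral in a Euclidean plane already defeats it), and convexity alone gives no escape estimate. One CAT($-1$) ball of radius $\epsilon$ in the middle of each segment does suffice to rule this out, but proving that is a quantitative statement --- it is what the combinatorial Gauss--Bonnet argument of Proposition \ref{prop2} establishes, after Proposition \ref{prop1} has first rebuilt the ray so that it crosses CAT($-1$) balls of \emph{uniform} radius at \emph{uniformly bounded} intervals, which your single segment $\ell|_{[m,n]}$ does not automatically provide after translation and concatenation.

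Your fallback --- the ``known fact'' that a geometric action on a geodesically complete proper CAT(0) space containing a rank-one geodesic admits a rank-one isometry --- is not available in this generality. The closing results of Ballmann--Buyalo and the contraction criteria entering Theorem \ref{thm:whenacylhyp} all require a uniform condition along the entire geodesic (being contracting, or failing to bound flat strips of a definite width in a recurrent way), not the pointwise, non-uniform condition of failing to bound a flat half-plane; passing from the latter to a rank-one isometry under a geometric action is essentially the content of the open Rank Rigidity Conjecture, which the paper explicitly notes would imply its theorem. There is also a secondary gap at the end of your construction: even granting an axis $\alpha$ fellow-travelling the broken path, you would need $\alpha$ itself, not merely the broken path, to enter a CAT($-1$) ball in order to exclude a flat half-plane along $\alpha$, and the fellow-travelling constant bears no a priori relation to the radius $\epsilon$ of those balls.
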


\noindent
As a consequence of this criterion, one immediately gets the following statement:

\begin{cor}
The fundamental group of a compact Riemannian manifold whose sectional curvature is nonpositive everywhere and negative in at least one point is either virtually cyclic or acylindrically hyperbolic.
\end{cor}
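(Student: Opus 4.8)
The plan is to deduce the corollary straight from Theorem~\ref{thm:mainintro} by letting $X$ be the Riemannian universal cover $\widetilde M$ of the given manifold $M$ and $G=\pi_1(M)$ act on it by deck transformations, and then checking that each hypothesis of the theorem holds. The soft part I would dispatch first: since $M$ is compact, the deck action of $G$ on $\widetilde M$ is free, properly discontinuous, isometric and cocompact, hence geometric. Since the sectional curvature of $M$ is nonpositive everywhere and curvature is a local invariant, $\widetilde M$ is a complete, simply connected Riemannian manifold of nonpositive sectional curvature; by the Cartan--Hadamard theorem $\widetilde M$ is a CAT$(0)$ space for its length metric, and by Hopf--Rinow it is proper. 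Finally, in a Hadamard manifold every complete Riemannian geodesic is a bi-infinite globally minimizing geodesic and every segment extends to such a line, so $\widetilde M$ is geodesically complete in the metric sense required by the theorem.

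The only point demanding an actual argument is the existence of a point of $\widetilde M$ with a CAT$(-1)$ neighborhood. Let $x_0\in M$ be a point at which the sectional curvature is negative, i.e.\ $\mathrm{sec}_{x_0}(\Pi)<0$ for every tangent $2$-plane $\Pi\subseteq T_{x_0}M$. Because the Grassmannian of $2$-planes in $T_{x_0}M$ is compact and $\mathrm{sec}$ is continuous on the Grassmann bundle of $M$, there are $\kappa>0$ and an open neighborhood $V\subseteq M$ of $x_0$ on which all sectional curvatures are $\le-\kappa$. I would then rescale the Riemannian metric of $M$ by a suitably small constant --- this changes neither $\pi_1(M)$, nor the geometric nature of the action, nor the conclusion, and it preserves nonpositivity of the curvature --- so that we may assume $\kappa\ge 1$, i.e.\ $\mathrm{sec}\le -1$ on $V$. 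A Riemannian manifold with sectional curvature $\le -1$ is locally CAT$(-1)$ (a space of curvature $\le -1$ in the sense of Alexandrov, by the classical comparison theorem), so some open set $W\subseteq V$ is a CAT$(-1)$ space; taking $W$ small enough to be evenly covered, any lift $\widetilde x_0\in\widetilde M$ of a point of $W$ has a neighborhood isometric to $W$, hence CAT$(-1)$. All hypotheses of Theorem~\ref{thm:mainintro} are now satisfied, so $G=\pi_1(M)$ is virtually cyclic or acylindrically hyperbolic.

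There is essentially no obstacle here: the corollary is the above verification. The one step worth care is the promotion of ``negative sectional curvature at a single point'' to a uniform bound $\mathrm{sec}\le -1$ on an honest open subset of the universal cover, which rests on compactness of the Grassmannian together with the harmless rescaling; alternatively, one can avoid rescaling by observing that the proof of Theorem~\ref{thm:mainintro} uses the distinguished point only to forbid flat rectangles near it, so it applies verbatim with CAT$(-1)$ weakened to CAT$(-\varepsilon)$ for any $\varepsilon>0$.
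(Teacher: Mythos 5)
Your proposal is correct and follows exactly the route the paper intends: the paper states this corollary as an immediate consequence of Theorem~\ref{thm:mainintro} without spelling out the verification, and you supply precisely the standard details (Cartan--Hadamard, Hopf--Rinow, geodesic completeness, geometricity of the deck action). Your treatment of the one genuinely non-automatic point --- upgrading negative curvature at a single point to a uniform bound $\mathrm{sec}\le -\kappa$ on a neighborhood via compactness of the Grassmannian, and then rescaling the metric so that the comparison theorem yields an honest CAT$(-1)$ neighborhood rather than merely a CAT$(-\kappa)$ one --- is exactly the care this deduction requires.
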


In fact, this statement has motivated the present work, following a remark made by Gromov in his article \emph{Asymptotic invariants of infinite groups} \cite{GromovAsymptotic}.

\begin{quotation}
``In fact, the fundamental groups $\Gamma$ of manifolds $V$ with $K \leq 0$, such that $K<0$ at some point $v \in V$ \emph{generalize} in a certain way the hyperbolic groups''. (\cite{GromovAsymptotic}, page 147.)
\end{quotation}

Our strategy to prove Theorem \ref{thm:mainintro} is the following. The first step is to construct a geodesic ray in our CAT(0) space passing periodically through CAT(-1) points (see Proposition \ref{prop1} below). Next, we show that such a ray must be a \emph{contracting} (or equivalently, \emph{rank-one}) geodesic ray in our CAT(0) space. More precisely, we prove the following general criterion:

\begin{prop}[see Proposition \ref{prop2} below]
Let $X$ be a CAT(0) space and $\gamma : \mathbb{R} \to X$ a geodesic line passing through infinitely many CAT(-1) points in such a way that the distance between any two consecutive such points is uniformly bounded from above and from zero and that the radii of the CAT(-1) neighborhoods are uniformly bounded away from zero. Then $\gamma$ is a contracting geodesic. 
\end{prop}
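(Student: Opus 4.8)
The plan is to show that $\gamma$ is contracting by verifying the standard ``thin triangle over the geodesic'' characterization: there is a constant $D$ such that for every geodesic segment $[x,y]$ whose endpoints project onto $\gamma$ to points at distance $\geq 1$ apart, the segment $[x,y]$ comes within $D$ of $\gamma$. Equivalently, and more convenient to set up, I would use the criterion that a bi-infinite geodesic $\gamma$ in a CAT(0) space is contracting if and only if it does not \emph{bound a half-flat}, or — working quantitatively — that geodesics with endpoints far apart along $\gamma$ but staying far from $\gamma$ would force the existence of arbitrarily large flat regions straddling $\gamma$, contradicting the CAT($-1$) behaviour at the periodic points. So the first step is to fix the bounds in the hypothesis: let $L$ bound the gaps between consecutive CAT($-1$) points from above, $\ell>0$ from below, and let $\rho>0$ be a uniform lower bound on the radii of the CAT($-1$) neighborhoods $B(p_i,\rho)$.

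The heart of the argument is a local-to-global estimate: a geodesic segment $\sigma$ that passes through a CAT($-1$) ball $B(p_i,\rho)$ while remaining within distance $\rho/2$ (say) of $\gamma$ must, inside that ball, ``peel away'' from $\gamma$ by a definite amount, because in a CAT($-1$) region two geodesics that enter close together and travel a definite distance $\ge \rho$ together either coincide up to exponentially small error or diverge — a genuinely negatively curved dichotomy not available in CAT(0). I would make this precise by comparing with $\mathbb{H}^2$: inside $B(p_i,\rho)$, the sub-bigon formed by $\gamma$ and $\sigma$ satisfies the CAT($-1$) comparison, and a short hyperbolic trigonometry computation gives that if $\sigma$ stays $\epsilon$-close to $\gamma$ across a sub-segment of length $\ge \rho$, then $\epsilon$ must be exponentially small in $\rho$ — in particular bounded by some explicit $\epsilon_0 = \epsilon_0(\rho) < \rho/2$. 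Summing this ``defect'' over the many CAT($-1$) balls that a long segment near $\gamma$ would have to cross (there are at least roughly $(\text{length})/L$ of them by the upper bound on gaps), I get that a segment shadowing a length-$T$ piece of $\gamma$ within distance $\epsilon_0$ accumulates total transverse excursion growing with $T$, which is impossible once $T$ is large since the segment is itself a geodesic and cannot oscillate back toward $\gamma$ indefinitely without violating convexity of the distance function $t \mapsto d(\sigma(t),\gamma)$. This forces: any geodesic segment with $d(\cdot,\gamma) \le \epsilon_0$ along it has length at most some universal $T_0$; hence $\gamma$ is Morse, hence contracting, by the equivalence of Morse and contracting for geodesics in CAT(0) spaces (Bestvina–Fujiwara, Charney–Sultan).

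In more detail on the ordering of steps: (1) record the constants $L,\ell,\rho$ and fix $\epsilon_0 = \epsilon_0(\rho)$ from the hyperbolic comparison; (2) prove the one-ball divergence lemma — a geodesic entering $B(p_i,\rho/10)$ within $\epsilon_0$ of $\gamma$ and leaving it must have transverse distance to $\gamma$ at least $2\epsilon_0$ at one of the two boundary crossings, using CAT($-1$) comparison with a hyperbolic bigon; (3) show that a geodesic $\sigma$ whose $\gamma$-projection moves by at least $L$ must cross some ball $B(p_i,\rho/10)$, using the upper bound $L$ on gaps together with the fact that $\rho/10$-neighborhoods of the $p_i$ along $\gamma$ overlap in the relevant sense or at least fill $\gamma$ up to bounded gaps (here the lower bound $\ell$ ensures there are not infinitely many $p_i$ piling up, but it is really the upper bound $L$ that matters); (4) combine: if $\sigma$ shadows $\gamma$ within $\epsilon_0$ for longer than $T_0 := 2L$, it crosses a CAT($-1$) ball and by (2) exits at transverse distance $\ge 2\epsilon_0 > \epsilon_0$, a contradiction — so the shadow has bounded length, giving the Morse/contracting property with explicit constants depending only on $L,\rho$.

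The main obstacle I anticipate is step (2)–(3), namely making the ``CAT($-1$) forces divergence'' estimate genuinely quantitative and ball-local: in a CAT($-1$) \emph{neighborhood} rather than a globally CAT($-1$) space, the comparison inequality only applies to triangles (or bigons) contained in that neighborhood, so I must be careful that the sub-bigon cut out from $\gamma \cup \sigma$ inside $B(p_i,\rho)$ genuinely lies in the CAT($-1$) ball — which is why I work with the smaller radius $\rho/10$ for the ``entering'' condition and reserve the full $\rho$ for the comparison, controlling the geodesic between the two crossing points by convexity of the metric to ensure it does not escape $B(p_i,\rho)$. A secondary subtlety is the projection $\pi_\gamma$ onto a bi-infinite geodesic: it is well-defined and $1$-Lipschitz on a CAT(0) space, but I should phrase the contracting property in terms of closest-point projections of balls having bounded diameter, or equivalently verify the Morse property directly for quasigeodesics, to avoid any friction with whether $\pi_\gamma$ is single-valued; invoking the Charney–Sultan equivalence at the end sidesteps most of this.
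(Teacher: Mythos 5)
Your plan hinges on the ``one-ball divergence lemma'' of step (2), and that lemma is false. In a CAT(0) space the $\epsilon$-neighborhood of the convex set $\gamma$ is itself convex (since $t \mapsto d(\sigma(t),\gamma)$ is convex), so a geodesic joining two points at distance $\epsilon_0/2$ from $\gamma$ on the same side stays within $\epsilon_0/2$ of $\gamma$ for its entire length, no matter how many CAT($-1$) balls it traverses; in particular it enters and exits every ball $B(p_i,\rho/10)$ at transverse distance $\le \epsilon_0/2$, never reaching $2\epsilon_0$. What negative curvature actually forces is the opposite of what you need: such a segment dips \emph{exponentially closer} to $\gamma$ in the middle of each CAT($-1$) ball, and a positive convex function can contract by a definite factor on each of arbitrarily many successive subintervals without any contradiction. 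Consequently the intermediate statement you aim for in step (4) --- ``any geodesic segment with $d(\cdot,\gamma)\le\epsilon_0$ has length at most $T_0$'' --- is false already in $\mathbb{H}^2$ (where every point is CAT($-1$) and $\gamma$ is certainly contracting): the geodesic between two points of the equidistant curve at height $\epsilon_0/2$, arbitrarily far apart, is arbitrarily long, disjoint from $\gamma$, and within $\epsilon_0/2$ of $\gamma$. So the reduction itself is unachievable, independently of the local lemma. A secondary issue: the ``does not bound a half-flat $\Rightarrow$ contracting'' equivalence you invoke as a fallback requires properness and cocompactness, neither of which is assumed in this proposition.

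The configuration one must control is the opposite one: a geodesic $[x,y]$ contained in a ball \emph{disjoint} from $\gamma$ (hence far from $\gamma$), for which one must bound the distance between the projections $z,w$ of $x,y$. The paper's proof does this by triangulating the quadrilateral $Q(x,y,w,z)$ with small triangles $(a_i,b_i,c_i)$ placed inside the CAT($-1$) balls along $[z,w]$, assigning Euclidean comparison angles everywhere except hyperbolic comparison angles at those triangles, and applying the combinatorial Gauss--Bonnet formula: each CAT($-1$) point whose $\epsilon$-ball misses $[x,y]$ contributes a definite negative curvature $-\left(\pi-4\arccos\sqrt{\cosh(\epsilon)/(\cosh(\epsilon)+1)}\right)$, computed by hyperbolic trigonometry, so their number is uniformly bounded by $\eta(\epsilon)$; a separate short argument handles the case where $[x,y]$ does meet one of these balls. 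If you want to keep your ``local hyperbolicity accumulates a definite defect'' intuition, this angle-deficit bookkeeping on the comparison quadrilateral is the correct place to deploy it.
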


The existence of such a ray implies the existence of a \emph{contracting} (or equivalently, \emph{rank-one}) isometry in our group, which finally implies that our group must be either virtually cyclic or acylindrically hyperbolic. 

It is worth noticing that, since our strategy is to construct a rank-one isometry, Theorem \ref{thm:mainintro} is related to the famous

\medskip \noindent
\textbf{Rank Rigidity Conjecture \cite{BallmannBuyalo}.} \emph{Let $X$ be a proper geodesically complete CAT(0) space and $G$ an infinite group acting geometrically on $X$. If $X$ is irreducible, then $X$ is a higher rank symmetric space, or a Euclidean building of dimension at least two, or $G$ contains a rank-one isometry.}

\medskip
In one direction, our theorem is a consequence of the conjecture. And in the oppositive direction, our theorem may be applied to verify the conjecture in particular cases. However, we do not know examples where our criterion applies and where the Rank Rigidity Conjecture remains unknown.

The article is organised as follows. In Section \ref{section:acyl}, we recall briefly several known characterisations of acylindrical hyperbolicity among CAT(0) groups, including the criterion which we will use to prove Theorem \ref{thm:mainintro} in Section \ref{section:mainthm}. Finally, in Section \ref{section:nonRH}, we end with two concluding remarks. It includes an example of a partially CAT(-1) group which is not relatively hyperbolic, showing that the conclusion of Theorem \ref{thm:mainintro} cannot be strengthened to get some relative hyperbolicity.

\paragraph{Acknowledgments.} The authors are grateful to their advisor, Peter Ha\"{i}ssinsky, and to Alexander Lytchak, for his useful comments on an earlier version of our article, which lead to a simplification of the proof of Proposition \ref{prop2}.

\section{Preliminaries}

\paragraph{CAT($\kappa$) spaces.}
In this paragraph, we briefly recall the definition of CAT($\kappa$) spaces for $\kappa\leq 0$ and the notion of angle in these spaces which will be used in the paper; we refer to \cite{MR1744486} for more information and proofs of the statements of this section. For $\kappa\leq 0$ let $(M_\kappa^2,d_\kappa)$ be the simply connected Riemannian 2-manifold of constant curvature equal to $\kappa$. That is

$$
M_\kappa^2 = \left\{
\begin{array}{ll}
	\frac{1}{\sqrt{-\kappa}}\mathbb{H}^2 & \mbox{if } \kappa<0\\
	\mathbb{E}^2 & \mbox{if } \kappa=0.
\end{array}
\right.
$$
 A \emph{triangle} in a geodesic metric space $X$ consists of three points $p,q,r\in X$ (the vertices) together with a choice of geodesics segments $[p,q]$, $[q,r]$ and $[r,p]$. Such a triangle will be denoted $\Delta(p,q,r)$ (this notation is not accurate because the geodesic segments are, in general, not unique). A \emph{comparison triangle} in $M_\kappa^2$ for $\Delta(p,q,r)$ is a triangle $\Delta(\bar{p},\bar{q},\bar{r})$ in $M_\kappa^2$ such that $d_\kappa(\bar{p},\bar{q})=d(p,q)$, $d_\kappa(\bar{q},\bar{r})=d(q,r)$ and $d_\kappa(\bar{r},\bar{p})=d(r,p)$. Such a triangle always exists and is unique up to isometry. A point $\bar{x}\in[\bar{p},\bar{q}]$ is called a \emph{comparison point} for $x\in[p,q]$ if $d(p,x)=d_\kappa(\bar{p},\bar{x})$. Similarly, we can define comparison points on $[\bar{q},\bar{r}]$ and $[\bar{p},\bar{r}]$.

\begin{definition}
Let $(X,d)$ be a geodesic metric space and $\kappa\leq 0$.
\begin{itemize}
\item A triangle $\Delta(p,q,r)$ in $X$ satisfies the \emph{CAT($\kappa$) inequality} if for all $x,y\in\Delta(p,q,r)$ and all comparison points $\bar{x}$, $\bar{y}\in\Delta(\bar{p},\bar{q},\bar{r})$,
$$d(x,y)\leq d_\kappa(\bar{x},\bar{y}).$$
\item $X$ is a \emph{CAT($\kappa$) space} if all triangles satisfies the CAT($\kappa$) inequality. 
%\item $X$ is \emph{of curvature $\leq\kappa$ }if it is locally a CAT($\kappa$) space, that is, for every $x\in X$ there is some ball $B(x,r_x)$ which a CAT($\kappa$) space with the induced metric.
\end{itemize}
\end{definition}

Basic examples of CAT($\kappa$) spaces, which motivated the above definition, are the simply connected Riemannian manifolds of sectional curvature $\leq \kappa$.

In CAT($\kappa$) spaces, $\kappa\leq 0$, one can define a notion of angle:

\begin{definition}
Let $(X,d)$ be a CAT($\kappa$) space, $\kappa\leq 0$ and $p$, $x$, $y$ three distinct points in $X$. Let $c_x$, $c_y$ be geodesic paths from $p$ to $x$ and $y$ respectively. Then the limit
$$\lim\limits_{t\rightarrow 0}\angle_{p}^{(0)}(c_x(t),c_y(t)),$$
exists, where $\angle_{p}^{(0)}(c_x(t),c_y(t))$ is the angle at $\bar{p}$ in a comparison triangle for $\Delta(p,c_x(t),c_y(t))$ in $M_0^2$. This limit is called the \emph{Alexandrov angle $\angle_p(x,y)$ at $p$ between $x$ and $y$}.
\end{definition}

The Alexandrov angle $\angle_p(x,y)$ is always smaller than the angle $\angle_{p}^{\kappa}(x,y)$ at $\bar{p}$ in a comparison triangle $\Delta(\bar{p},\bar{x},\bar{y})\subset M_\kappa^2$ and satisfies the triangular inequality.

\paragraph{Acylindrically hyperbolic CAT(0) groups.}\label{section:acyl}
We do not define acylindrically hyperbolic groups here, and refer to \cite{OsinAcyl} and references therein for more information on this class of groups. We only mention the following characterisation of acylindrical hyperbolicity in the context of CAT(0) groups. (This characterisation is a consequence of the combination of results proved in \cite{Sistohypembed, arXiv:1112.2666, MR3339446, BallmannBuyalo}; see \cite[Theorem 6.40]{MoiHypCube} for more details).

\begin{thm}\label{thm:whenacylhyp}
Let $G$ be a non virtually cyclic group acting geometrically on a CAT(0) space $X$. Then $G$ is acylindrically hyperbolic if and only if $X$ contains a contracting geodesic ray.
\end{thm}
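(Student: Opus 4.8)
The plan is to deduce Theorem \ref{thm:whenacylhyp} by combining known facts about contracting isometries of CAT(0) spaces with the two characterisations of acylindrical hyperbolicity via generalised loxodromic elements and hyperbolically embedded subgroups. The pivotal observation is that, for a geometric action, the existence of a contracting geodesic ray in $X$ is equivalent to the existence of a \emph{rank-one} (equivalently, contracting) isometry in $G$. One implication is immediate, since an axis of a rank-one isometry is a contracting geodesic line. For the converse one runs a standard limiting argument: translating a contracting ray by elements of $G$ so that its origin stays in a fixed compact fundamental domain and passing to a limit (using properness and the Arzel\`a--Ascoli theorem) yields a contracting line, which can be made periodic and is therefore an axis of some infinite-order $g \in G$; uniformity of the contraction constant along the orbit ensures that $g$ is itself a contracting isometry. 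This reduces the theorem to the equivalence: \emph{a non virtually cyclic group $G$ acting geometrically on $X$ is acylindrically hyperbolic if and only if $G$ contains a contracting isometry of $X$.}

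For the ``if'' direction, let $g \in G$ be a contracting isometry. Its axis being a contracting line, $g$ is a contracting element in the metric sense of \cite{arXiv:1112.2666}, and since moreover the action of $G$ on $X$ is proper, the results of \cite{Sistohypembed, arXiv:1112.2666} show that the maximal virtually cyclic subgroup $E(g)$ containing $g$ is hyperbolically embedded in $G$. As $G$ is not virtually cyclic, $E(g)$ is a proper infinite hyperbolically embedded subgroup, so $G$ is acylindrically hyperbolic by \cite{DGO}.

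For the ``only if'' direction, assume $G$ is acylindrically hyperbolic. By \cite{OsinAcyl}, $G$ contains a generalised loxodromic element $g$; since $G$ acts geometrically on the CAT(0) space $X$, $g$ is a semisimple isometry of infinite order and hence admits an axis $\ell \subset X$. By \cite{Sistohypembed} the cyclic subgroup $\langle g \rangle$ is Morse (stable) in $G$, and transporting this property along the $G$-equivariant quasi-isometry $G \to X$ furnished by the Milnor--\v{S}varc lemma shows that $\ell$ is a Morse geodesic of $X$. Finally, a Morse geodesic line in a CAT(0) space is contracting — equivalently, it bounds no flat half-plane — which is part of the rank-one theory of \cite{BallmannBuyalo} (see also \cite{MR3339446}); hence $\ell$, and a fortiori any of its sub-rays, is a contracting geodesic ray of $X$. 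The whole chain of implications is worked out in \cite[Theorem 6.40]{MoiHypCube}.

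I expect the main obstacle to lie in the ``only if'' direction, specifically in passing from the coarse Morse property of $\langle g \rangle$ inside $G$ to the genuine absence of a flat half-plane along the axis $\ell$ in $X$: one must verify that Morse-ness survives the quasi-isometry $G \to X$ and then invoke the non-trivial CAT(0) fact that a Morse geodesic cannot bound a flat half-plane, which is precisely where the CAT(0) geometry and the Ballmann--Buyalo-type arguments are genuinely needed. The periodicity step in the ``if'' direction is also delicate, but it is by now routine in the CAT(0) literature.
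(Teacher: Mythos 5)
Your proposal is correct and follows essentially the same route as the paper: the paper gives no independent argument for Theorem \ref{thm:whenacylhyp}, but states it as a combination of exactly the results you invoke (Sisto's hyperbolically embedded virtually cyclic subgroups, the Bestvina--Fujiwara/Charney--Sultan contracting--Morse--rank-one equivalences, and the Ballmann--Buyalo promotion of a contracting geodesic to a periodic one under a cocompact action), deferring to \cite[Theorem 6.40]{MoiHypCube} for the details. Your sketch is an accurate account of how those ingredients fit together, including a correct identification of the genuinely nontrivial steps.
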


\noindent
Recall that, given a metric space $(S,d)$, a subspace $Y \subset S$ is \emph{contracting} if there exists some $B \geq 0$ such that the nearest-point projection onto $Y$ of any ball which is disjoint from $Y$ has diameter at most $B$. The strategy to prove our main theorem will be to construct a contracting geodesic line.

\section{Partially CAT(-1) groups}\label{section:mainthm}

\noindent
In this section, we prove the main result of our article. Namely:

\begin{thm}\label{thm:main}
Let $G$ be a group acting geometrically on some geodesically complete proper CAT(0) space $X$. Suppose that $X$ contains at least one point with a CAT(-1) neighborhood. Then $G$ must be either virtually cyclic or acylindrically hyperbolic.
\end{thm}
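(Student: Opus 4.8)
The plan is to reduce the theorem to the criterion of Theorem~\ref{thm:whenacylhyp}: assuming $G$ is not virtually cyclic, we must produce a contracting geodesic line in $X$. By hypothesis there is a point $x_0 \in X$ with a CAT($-1$) neighborhood; since $G$ acts geometrically (hence cocompactly), the $G$-translates of this neighborhood cover a coarsely dense set of points, so the ``bad'' (purely Euclidean-looking) part of $X$ is, in a suitable sense, bounded modulo the action. The first step is to construct a geodesic line $\gamma : \mathbb{R} \to X$ that passes, periodically, through CAT($-1$) points with uniformly bounded gaps and with uniformly bounded-below radii for the CAT($-1$) balls. This is exactly the content of Proposition~\ref{prop1} as advertised in the introduction: using geodesic completeness one extends geodesic segments, and using cocompactness of the action together with the fact that CAT($-1$)-ness of a ball is a $G$-invariant, open-type property one arranges that, after translating, the segment repeatedly re-enters a translate of the fixed CAT($-1$) neighborhood. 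I would build an infinite concatenation of such segments, take a limit (properness of $X$ gives convergence on compacta), and check the limit is a genuine geodesic line meeting the CAT($-1$) points with the required uniformity.

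The second step is to invoke Proposition~\ref{prop2} (the general criterion stated in the introduction): a geodesic line in a CAT(0) space that passes through infinitely many CAT($-1$) points, with consecutive points at distance bounded above and below and with CAT($-1$) radii bounded below, is contracting. Granting that proposition, the line $\gamma$ produced in step one is contracting, so in particular $X$ contains a contracting geodesic ray.

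The third step is purely a matter of citing Theorem~\ref{thm:whenacylhyp}: $G$ is not virtually cyclic (our standing assumption) and acts geometrically on the CAT(0) space $X$, which now contains a contracting geodesic ray, hence $G$ is acylindrically hyperbolic. Combining with the excluded case, $G$ is either virtually cyclic or acylindrically hyperbolic, which is the assertion of Theorem~\ref{thm:main}. (One could equivalently promote $\gamma$ to a rank-one \emph{isometry} of $G$ via a standard Morse-type limiting argument using cocompactness, and then apply the rank-one version of the criterion; but routing through the contracting-ray formulation of Theorem~\ref{thm:whenacylhyp} is cleaner.)

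I expect the main obstacle to be step one, the construction of the periodic geodesic line --- specifically, arranging the \emph{uniform} lower bound on the radii of the CAT($-1$) neighborhoods and the uniform bounds on the gaps between consecutive CAT($-1$) points simultaneously with the limiting procedure. The subtlety is that extending a geodesic (via geodesic completeness) and translating back into a fixed compact fundamental domain need not, a priori, land one inside the \emph{same} CAT($-1$) ball each time; one has to exploit that there are only ``finitely many shapes'' of such balls up to the action, pass to a suitable subsequence/diagonal argument, and verify that no uniformity is lost in the limit. Proposition~\ref{prop2} itself, being quoted, is not an obstacle here, though its proof (comparing Alexandrov angles against the sharper CAT($-1$) comparison angle at each special point to force a definite ``turning'' that prevents long projections) is where the genuine geometric content lies.
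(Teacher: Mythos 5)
Your proposal follows essentially the same route as the paper: Proposition~\ref{prop1} to produce the periodic CAT($-1$) ray, a translation-plus-Arzel\`a--Ascoli limit (using Fact~\ref{fact} to preserve CAT($-1$)-ness and the uniform radii) to upgrade it to a geodesic line, Proposition~\ref{prop2} to see that this line is contracting, and Theorem~\ref{thm:whenacylhyp} to conclude. The only cosmetic difference is that you describe Proposition~\ref{prop1} as already yielding a line, whereas it yields a ray and the paper performs the limiting step separately in the proof of Theorem~\ref{thm:main}; your own description of the limiting procedure covers this.
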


\noindent
For convenience, from now on we will refer to points admitting CAT(-1) neighborhoods as \emph{CAT(-1) points}. Our theorem will be a consequence of Propositions \ref{prop1} and \ref{prop2} below. The first proposition shows that $X$ must contain some geodesic line passing through infinitely many CAT(-1) points in such a way that the distance between any two consecutive such points is uniformly bounded. Next, our second proposition states that such a line must be slim, or equivalently, contracting. The conclusion finally follows from Theorem \ref{thm:whenacylhyp}.

\begin{prop}\label{prop1}
Let $X$ be a geodesically complete, cocompact and proper CAT(0) space which contains at least one point with a CAT(-1) neighborhood. Then there exists some geodesic ray $r : [0,+ \infty) \to X$ and an increasing sequence $(t_n)$ of nonnegative reals tending to $+ \infty$ such that for every $n \geq 0$, $r(t_n)$ has a CAT(-1) neighborhood with radii uniformly bounded away from zero  and such that the auxiliary sequence $(t_{n+1}-t_n)$ is bounded. 
\end{prop}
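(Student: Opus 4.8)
The plan is to exploit the cocompactness of the action to "spread" the single CAT(-1) point throughout the space at bounded density, and then to extract a geodesic ray that visits this net of CAT(-1) points infinitely often and at bounded gaps. Let me describe the steps.

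\textbf{Step 1: A uniform net of CAT(-1) points.} Let $p_0 \in X$ be a point with a CAT(-1) neighborhood, say a ball $B(p_0, \rho_0)$ that is CAT(-1). Since the $G$-action on $X$ is cocompact, there is a constant $D > 0$ such that $G \cdot p_0$ is $D$-dense in $X$: every point of $X$ lies within distance $D$ of some translate $g p_0$. Each translate $g p_0$ has the CAT(-1) ball $B(g p_0, \rho_0)$ around it (isometries preserve the CAT(-1) condition), so the radii are uniformly bounded below by $\rho_0$. Thus we have at our disposal a $D$-dense set $\mathcal{P} = G \cdot p_0$ of CAT(-1) points all carrying CAT(-1) neighborhoods of radius $\geq \rho_0$.

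\textbf{Step 2: Constructing the ray and verifying it meets $\mathcal{P}$ with bounded gaps.} Here I would use geodesic completeness: fix any point $q_0 \in X$ and extend a geodesic segment emanating from $q_0$ to an infinite geodesic ray $r : [0, +\infty) \to X$ (geodesic completeness guarantees every geodesic segment extends, and by properness one can take a limit to get a genuine ray). Now for each integer $k \geq 0$, the point $r(k)$ lies within distance $D$ of some point $q_k \in \mathcal{P}$. I claim that a nearest point on $r$ to $q_k$ gives the desired $r(t_k)$: indeed, the CAT(0) projection $\pi(q_k)$ of $q_k$ onto the (convex, complete) geodesic $r(\mathbb{R}_{\geq 0})$ satisfies $d(r(k), \pi(q_k)) \leq d(r(k), q_k) \leq D$ (projections onto convex sets are $1$-Lipschitz and fix points of the set, and $r(k)$ is on $r$), hence $\pi(q_k) = r(s_k)$ with $|s_k - k| \leq D$. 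The issue is that $r(s_k)$ need not itself be a CAT(-1) point — it is only within $d(q_k, r) \le d(q_k, r(k)) \le D$ of one. To fix this, I instead want the ray to actually pass through points of $\mathcal{P}$. So the cleaner approach is: by cocompactness choose the ray $r$ so that, after translating by a suitable group element, $r(0)$ itself is a point of $\mathcal{P}$ (take $r$ through $p_0$), and then \emph{periodically re-center}: having built $r$ up to some far-away point, translate by $g \in G$ moving a point of the ray near $p_0$ back to $p_0$, and concatenate/extend. Carried out carefully with a compactness (Arzelà–Ascoli) limiting argument — repeatedly apply group elements bringing the $k$-th "far" point of an ever-longer geodesic segment close to the compact fundamental domain, and pass to a subsequential limit of the resulting segments through $p_0$ — one obtains a single geodesic ray (indeed line, if one extends in both directions) and a sequence $(t_n) \to +\infty$ with $r(t_n) \in \mathcal{P}$ and $t_{n+1} - t_n \leq $ (some constant depending on $D$ and the fundamental domain diameter). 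Each $r(t_n)$, being in $G \cdot p_0$, carries a CAT(-1) ball of radius $\geq \rho_0$.

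\textbf{Main obstacle.} The delicate point is the last one: ensuring the limiting ray genuinely passes through CAT(-1) points (not merely near them) while keeping the consecutive gaps bounded. The tension is that translating to force passage through $p_0$ at chosen parameters, and then taking a limit, may destroy control unless the re-centering is done at a \emph{bounded} frequency; one must check the Arzelà–Ascoli limit of the re-centered segments still records infinitely many exact visits to $G \cdot p_0$ with uniformly bounded spacing. I expect this to be handled by a standard diagonal argument: build segments $\sigma_m$ of length $m$, each passing through $p_0$ at a bounded net of parameters $\{j \delta : 0 \le j\delta \le m\}$ up to translating subsegments by group elements, normalize so $\sigma_m(0) = p_0$, extract a convergent subsequence via properness, and observe the limit inherits visits to $G\cdot p_0$ at spacing $\le \delta$ plus the constant from Step 1.
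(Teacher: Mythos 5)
Your Step 1 matches the paper's starting point, and you correctly diagnose why the naive approach fails: projecting nearby orbit points onto an arbitrary ray only lands you within the codiameter $D$ of a CAT(-1) point, which is not controlled relative to the radius $\rho_0$ of the CAT(-1) balls. But the repair you propose --- periodically re-centering by group elements and ``concatenating/extending'' ever-longer segments through $p_0$, then taking an Arzel\`a--Ascoli limit --- has a genuine gap at exactly the point you flag as the main obstacle. A concatenation of geodesic segments translated by group elements is not a geodesic, and there is no mechanism in your sketch forcing a single honest geodesic segment of length $m$ to pass through (or uniformly near) the net $G\cdot p_0$ at bounded gaps: in the paper's own example of a surface amalgamated with tori, plenty of long geodesics stay entirely in the flat parts and never approach a CAT(-1) point, so this is not something one can get for free from density of the orbit. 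Tellingly, your construction never uses the CAT(-1) hypothesis except to label the points of the net, whereas the construction genuinely requires it.

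The missing ingredient is the paper's uniform shadow lemma (Lemma \ref{lemma2}): if $z$ is a CAT(-1) point on a geodesic $[o,y]$ with $d(z,y)\geq K$ for a constant $K=K(R,\epsilon,c)$ \emph{independent of} $d(o,z)$, then every geodesic from $o$ to any point of $B(y,R)$ must pass through $B(z,\epsilon)$. The proof of this is where the CAT(-1) neighborhood does real work: a rescaling/Arzel\`a--Ascoli argument produces two distinct parallel lines from a putative sequence of counterexamples, and the Flat Strip Theorem contradicts the CAT(-1) ball at the limit point (the Euclidean Thales comparison alone only gives a constant $k$ growing linearly in $d(o,z)$, which would ruin the bounded gaps). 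With this lemma in hand one builds a \emph{nested sequence of shadows} $\mathcal{O}_o(z_{n+1},\epsilon)\subset\mathcal{O}_o(z_n,\epsilon)$: extend $[o,z_n]$ by geodesic completeness to a point $y_n$ at distance $K$, use cocompactness to find an orbit point $z_{n+1}$ with $B(z_{n+1},\epsilon)\subset B(y_n,R)$, and conclude that $[o,z_{n+1}]$ still meets every earlier ball $B(z_k,\epsilon)$. The limit of the segments $[z_1,z_n]$ then automatically inherits the visits, landing in each $\bar B(z_n,\epsilon)$; note also that one does not need the ray to pass exactly through orbit points --- passing within $\epsilon$ of a point with a $2\epsilon$ CAT(-1) ball already gives $r(t_n)$ a CAT(-1) neighborhood of radius $\epsilon$, so your worry about ``exact visits'' is a red herring, while the bounded-gap control is the genuinely hard part that your sketch does not supply.
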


\noindent
The proposition will follow from the following two lemmas:

\begin{lemma}
Let $X$ be a CAT(0) space and $R,\delta,\epsilon>0$ some fixed constants. Set $k=\frac{\delta}{\epsilon}+1$. For every geodesic ray $r$, the segment $[r(0),x]$ intersects $B(r(t),\epsilon)$ for any $t\in[0,\delta]$ and $x\in B(r(t+kR),R)$.
\end{lemma}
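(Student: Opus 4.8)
The plan is to exploit the convexity of the distance function in CAT(0) spaces. Fix a geodesic ray $r$, a parameter $t\in[0,\delta]$, and a point $x\in B(r(t+kR),R)$. I would consider the two geodesics issuing from $r(0)$: first the restriction of $r$ to $[0,t+kR]$, reparametrized affinely as $\rho:[0,1]\to X$, $\rho(s)=r\big((t+kR)s\big)$, so that $\rho(0)=r(0)$ and $\rho(1)=r(t+kR)$; and second the geodesic $\sigma:[0,1]\to X$ from $r(0)$ to $x$, again parametrized proportionally to arc length, so that $\sigma(0)=r(0)$ and $\sigma(1)=x$. Note that $\sigma(s)$ runs over the segment $[r(0),x]$ as $s$ runs over $[0,1]$.

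The key ingredient is the standard fact that in a CAT(0) space, if $c_1,c_2:[0,1]\to X$ are geodesics parametrized proportionally to arc length then $s\mapsto d(c_1(s),c_2(s))$ is convex (see \cite{MR1744486}). Applying this to $\rho$ and $\sigma$, which share the same origin $r(0)$, yields $d(\rho(s),\sigma(s))\leq s\cdot d\big(r(t+kR),x\big)\leq sR$ for every $s\in[0,1]$.

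It then remains to pick the right value of $s$. Choosing $s_0:=\frac{t}{t+kR}\in[0,1]$, we have $\rho(s_0)=r(t)$ by construction, while $\sigma(s_0)$ lies on $[r(0),x]$, and the estimate above gives $d\big(r(t),\sigma(s_0)\big)\leq s_0R=\frac{t}{t+kR}R$. Since $t\leq\delta$ and $t+kR\geq kR$, this is at most $\frac{\delta}{kR}R=\frac{\delta}{k}$, and since $k=\frac{\delta}{\epsilon}+1\geq\frac{\delta}{\epsilon}$ we conclude $\frac{\delta}{k}\leq\epsilon$. Hence $\sigma(s_0)\in[r(0),x]\cap B(r(t),\epsilon)$, which is the assertion. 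I do not anticipate any genuine difficulty: the only care needed is in choosing $s_0$ so that $\rho(s_0)$ is exactly $r(t)$, and in the short chain of inequalities that converts the bound $s_0R$ into $\epsilon$ through the definition of $k$; working with the affine parametrization on $[0,1]$ rather than with arc length has the side benefit of making the argument insensitive to whether $d(r(0),x)\geq t$ or not.
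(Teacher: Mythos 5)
Your proof is correct and is essentially the paper's argument in different packaging: the paper carries out the Euclidean comparison triangle and Thales' theorem by hand to get $d(\omega,z)\leq\frac{t}{t+kR}\,d(\bar x,\bar y)\leq\frac{\delta R}{t+kR}<\epsilon$, which is exactly your convexity estimate at $s_0=\frac{t}{t+kR}$ for two geodesics issuing from $r(0)$. (One cosmetic point: to land strictly inside the open ball, note that $k=\frac{\delta}{\epsilon}+1>\frac{\delta}{\epsilon}$ gives $\frac{\delta}{k}<\epsilon$, not merely $\leq\epsilon$.)
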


\begin{proof}
We note $o=r(0)$, $z=r(t)$, $y=r(t+kR)$. Consider $x\in B(y,R)$ and a comparison triangle $\Delta(\bar{o},\bar{y},\bar{x})$ in $\mathbb{E}^2$ for the geodesic triangle $\Delta(o,y,x)$. Let $\bar{\omega}$ be the intersection between $[\bar{o},\bar{x}]$ and the parallel line to $(\bar{y},\bar{x})$ passing through $\bar{z}$ and let $\omega$ be the corresponding point on $[0,x]$. Then, Thales theorem gives
\[ \frac{d(\bar{o},\bar{z})}{d(\bar{o},\bar{y})} = \frac{d(\bar{\omega},\bar{z})}{d(\bar{x},\bar{y})},  \]
and with the CAT($0$) inequality we get
\begin{align*}
d(\omega,z)\leq d(\bar{\omega},\bar{z})=\frac{d(\bar{o},\bar{z})}{d(\bar{o},\bar{y})}d(\bar{x},\bar{y})\leq\frac{\delta R}{t+kR}<\epsilon.
\end{align*}
\end{proof}

\begin{lemma}\label{lemma2}
Let $X$ be a cocompact proper CAT(0) space which contains at least one point with a CAT(-1) neighborhood and $R,\epsilon,c>0$ some fixed constants. There exists some $k=k(R,\epsilon,c)$ such that, if $o,z,y,x\in X$ are points satisfying:
\begin{enumerate}
	\item $z$ belongs to $[z,y]$; 
	\item $d(x,y)\leq R$ and $d(z,y)\geq k$;
	\item $B(z,c)$ is a CAT(-1) neighborhood of $z$;
\end{enumerate}
then the segment $[o,x]$ intersects the ball $B(z,\epsilon)$.
\end{lemma}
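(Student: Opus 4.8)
The plan is to prove the lemma (with condition (1) read as ``$z\in[o,y]$'') by contradiction. If no $k$ works, then for every $n\ge 1$ there is a counterexample $(o_n,z_n,y_n,x_n)$ with $z_n\in[o_n,y_n]$, $d(x_n,y_n)\le R$, $d(z_n,y_n)\ge n$, $B(z_n,c)$ a CAT(-1) neighbourhood of $z_n$, and $[o_n,x_n]\cap B(z_n,\epsilon)=\emptyset$; note $o_n\ne z_n$. Comparing the triangle $\Delta(o_n,y_n,x_n)$ with its Euclidean comparison triangle and applying Thales' theorem exactly as in the proof of the preceding lemma yields $d(z_n,[o_n,x_n])\le\frac{d(o_n,z_n)}{d(o_n,z_n)+d(z_n,y_n)}R$, which here is at least $\epsilon$; rearranging gives $(R-\epsilon)\,d(o_n,z_n)\ge\epsilon\,d(z_n,y_n)$, so in particular $R>\epsilon$ and $d(o_n,z_n)\ge\frac{\epsilon}{R-\epsilon}d(z_n,y_n)\ge\frac{\epsilon}{R-\epsilon}n$. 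Thus along the counterexamples \emph{both} $d(o_n,z_n)\to\infty$ and $d(z_n,y_n)\to\infty$ --- this is the reduction that lets cocompactness do its work.

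Next I would renormalise and take limits. Fix a compact set $K$ with $\mathrm{Isom}(X)\cdot K=X$ and translate each tuple by an isometry so that $z_n\in K$; every hypothesis is isometry-invariant, so this is harmless, and after passing to a subsequence $z_n\to z_\infty\in K$. For $n$ large $B(z_\infty,c/2)\subseteq B(z_n,c)$, and since balls are convex $B(z_\infty,c/2)$ is itself CAT(-1). By convexity of the CAT(0) metric, $d(z_n,[o_n,x_n])\le d(x_n,y_n)\le R$, so the point $w_n\in[o_n,x_n]$ nearest to $z_n$ satisfies $\epsilon\le d(z_n,w_n)\le R$ and stays bounded. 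Reparametrising $[o_n,y_n]$ so that $z_n\mapsto 0$ and $[o_n,x_n]$ so that $w_n\mapsto 0$, and using that $d(o_n,z_n),d(z_n,y_n),d(o_n,w_n),d(w_n,x_n)\to\infty$ together with properness and the Arzel\`a--Ascoli theorem, I extract (after further subsequences) geodesic \emph{lines} $\tau,\sigma\colon\mathbb R\to X$ with $\tau(0)=z_\infty$, $\sigma(0)=\lim w_n$, and $d(z_\infty,\sigma)=\lim d(z_n,w_n)\ge\epsilon$ (the last point because $w_n$ realises $d(z_n,[o_n,x_n])$). Because $d(x_n,y_n)\le R$ while $d(z_\infty,x_n),d(z_\infty,y_n)\to\infty$, the points $x_n$ and $y_n$ converge to a common ideal point $\xi_+\in\partial X$, and $o_n$ converges to some $\xi_-\in\partial X$ (it escapes every bounded set); a Gromov-product computation, using $d(o_n,y_n)=d(o_n,z_n)+d(z_n,y_n)$, shows $\xi_-\ne\xi_+$. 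Tracing the ideal endpoints through the convergence, $\tau$ and $\sigma$ are geodesic lines with the \emph{same} pair of endpoints at infinity $\{\xi_-,\xi_+\}$.

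The contradiction then comes from the Flat Strip Theorem \cite{MR1744486}. Since $z_\infty\in\tau$ but $d(z_\infty,\sigma)\ge\epsilon>0$, the parallel lines $\tau$ and $\sigma$ are distinct, so the convex hull of $\tau\cup\sigma$ is isometric to a genuine flat strip $[0,D]\times\mathbb R$ with $D=d(\tau,\sigma)>0$, with $\tau$ a boundary line. Hence a neighbourhood of $z_\infty$ is isometric to a Euclidean half-disk, and so $B(z_\infty,c/2)$ contains an isometrically embedded non-degenerate Euclidean triangle. This is absurd in a CAT(-1) space, since a non-degenerate Euclidean triangle is strictly fatter than its comparison triangle in $\mathbb{H}^2$ (for instance, two of its midpoints lie at distance exactly half the opposite side, strictly more than in $\mathbb{H}^2$) --- contradicting that $B(z_\infty,c/2)$ is CAT(-1).

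I expect the delicate part to be the limiting step of the second paragraph: ensuring the segments $[o_n,y_n]$ and $[o_n,x_n]$ converge to bi-infinite \emph{lines} (which is exactly why the first-paragraph reduction, forcing $d(o_n,z_n)\to\infty$, is needed) and that the two limit lines share \emph{both} ideal endpoints, so that the Flat Strip Theorem produces a flat strip running through the CAT(-1) point $z_\infty$. The remaining ingredients --- the Thales estimate, the isometric renormalisation supplied by cocompactness, and the incompatibility of a flat triangle with CAT(-1) curvature --- are routine.
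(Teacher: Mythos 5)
Your proposal is correct and follows essentially the same route as the paper's proof: argue by contradiction, use the Euclidean/Thales comparison from the preceding lemma to force $d(o_n,z_n)\to\infty$, renormalise by cocompactness and apply Arzel\`a--Ascoli to obtain two distinct parallel limit lines through (resp.\ at distance $\ge\epsilon$ from) a limit point $z_\infty$ that still has a CAT(-1) neighbourhood, and derive a contradiction from the Flat Strip Theorem. The only cosmetic difference is that you establish parallelism of the two limit lines via their endpoints at infinity, whereas the paper does so directly from the fact that $[o_n,x_n]$ stays in the $R$-neighbourhood of $[o_n,y_n]$ by convexity of the distance.
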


\begin{proof} 
By contradiction, assume that, for all $n\in\mathbb{N}$, there exist $o_n,x_n,y_n,z_n \in X$ such that:
	\begin{itemize}
		\item $z_n$ belongs to $[z_n,y_n]$; 
		\item $d(x_n,y_n)\leq R$ and $d(z_n,y_n)\geq n$;
		\item $B(z_n,c)$ is a CAT(-1) neighborhood of $z_n$;
		\item $[o_n,x_n]\cap B(z_n,\epsilon)=\emptyset$.
	\end{itemize} 
This last assumption, together with the previous lemma, implies that 
$$n \leq d(z_n,y_n) \leq \left( \frac{d(o_n,z_n)}{\varepsilon}+1 \right) \cdot R$$ 
so $d(o_n,z_n)\xrightarrow[n\rightarrow+\infty]{}+\infty$. Since $X$ is cocompact, up to translation, we may assume that the sequence $(z_n)$ stays in a compact $K \subset X$; and since $d(o_n,z_n)$ and $d(z_n,y_n)$ both tend to $+\infty$, we deduce from Arzel\`a-Ascoli theorem that, up to subsequence, the segments $[o_n,y_n]$ tend to a geodesic line $l$. Similarly, because $[o_n,x_n]$ is included into the $R$-neighborhood of $[o_n,y_n]$ as a consequence of the convexity of the distance, up to a subsequence the segments $[o_n,x_n]$ converge to a geodesic line $l'$ parallel to $l$. These lines are distinct since $d(z_n,[o_n,x_n])\geq\epsilon$ for all $n$. The contradiction comes from the Flat Strip Theorem \cite[Theorem II.2.13]{MR1744486} and the following fact, stating that a limit of CAT(-1) points must be CAT(-1) as well:
	
	\begin{fact}\label{fact}
	Let $(z_n)$ be a sequence of points converging to some $z \in X$ such that $B(z_n,c)$ is CAT(-1) for all $n$. Then $B(z,\frac{c}{2})$ is a CAT(-1) neighborhood of $z$.
\end{fact}

\noindent
Indeed, for $n$ large enough, $B(z,\frac{c}{2})\subset B(z_n,c)$. The conclusion follows from the convexity of balls.
\end{proof}

\noindent
Now we are ready to prove Proposition \ref{prop1}. For $o,z\in X$ and $R>0$ we define the \emph{shadow} $\mathcal{O}_o(z,R)$ to be the set of $y\in X$ such that the segment $[o,y]$ meets the ball centered at $z$ of radius $R$.

\begin{proof}[Proof of Proposition \ref{prop1}]
Let $Y=X\slash G$ be a compact quotient of $X$ where $G<\mathrm{Isom}(X)$ and $z\in X$ a point such that $B(z,2\epsilon)$ is a CAT(-1) neighborhood for some $\epsilon>0$. Choose $R\geq 2\epsilon+\mathrm{diam}(Y)$ and $K\geq\max(R+2\epsilon,k(R,\epsilon,2\epsilon))$ where $k(R,\epsilon,2\epsilon)$ is the constant from Lemma \ref{lemma2}. Finally, we fix a base point $o\in X$.

\medskip \noindent
We now construct by induction a sequence of points $(z_n)$ in the $G$-orbit of $z$ such that for all $n \geq 1$:
\begin{enumerate}
	\item $\mathcal{O}_o(z_n,\epsilon)\subset\dots\subset\mathcal{O}_o(z_1,\epsilon)$;
	\item there exist $z_n^1, \ldots, z_n^n \in [o,z_n]$ such that $z_n^k \in B(z_k,\epsilon)$, $\epsilon\leq d(z_n^k,z_n^{k+1})\leq K+R+\epsilon$ and $z_n^n=z_n$.
\end{enumerate}
\textbf{Step $n=1$}. Simply take $z=z_1$.

\medskip \noindent
\textbf{From step $n$ to step $n+1$}. Assume that $z_1,\dots,z_n$ are constructed. Since $X$ is geodesically complete, we can extend the segment $[o,z_n]$ to a geodesic ray $r_n$ and consider the point $y_n$ on $r_n$ such that $d(y_n,z_n)=K$. By the choice of $K$ and Lemma \ref{lemma2}, we have
\[  \mathcal{O}_{o}(y_n,R) \subset\mathcal{O}_o(z_n,\epsilon).  \]
Moreover, since the orbit of $z$ is $\mathrm{diam}(Y)$-dense, by choice of $R$, there is some $z_{n+1}\in G \cdot z$ such that $B(z_{n+1},\epsilon)\subset B(y_n,R)$. It follows that 
\[ \mathcal{O}_o(z_{n+1},\epsilon)\subset\mathcal{O}_o(y_n,R)\subset\mathcal{O}_o(z_n,\epsilon),\]
and so, $[o,z_{n+1}]$ meets every ball $B(z_k,\epsilon)$, $k=1,\dots,n$. Finally, if one fixes some $z_{n+1}^{k}\in B(z_k,\epsilon)\cap[o,z_{n+1}]$, then for $k\leq n-1$ we have
\[ d(z_{n+1}^k,z_{n+1}^{k+1})\leq d(z_{n+1}^k,z_k) +d(z_k,y_{k+1})+d(y_{k+1},z_{n+1}^{k+1})\leq \epsilon+K+R, \]
and
\[ d(z_{n+1}^k,z_{n+1}^{k+1})\geq d(z_k,y_{k+1})- d(z_{n+1}^k,z_k)-d(y_{k+1},z_{n+1}^{k+1})\geq K-R-\epsilon\geq \epsilon. \]
\begin{figure}
\begin{center}
\includegraphics[trim={0 0 0 0},clip,scale=0.35]{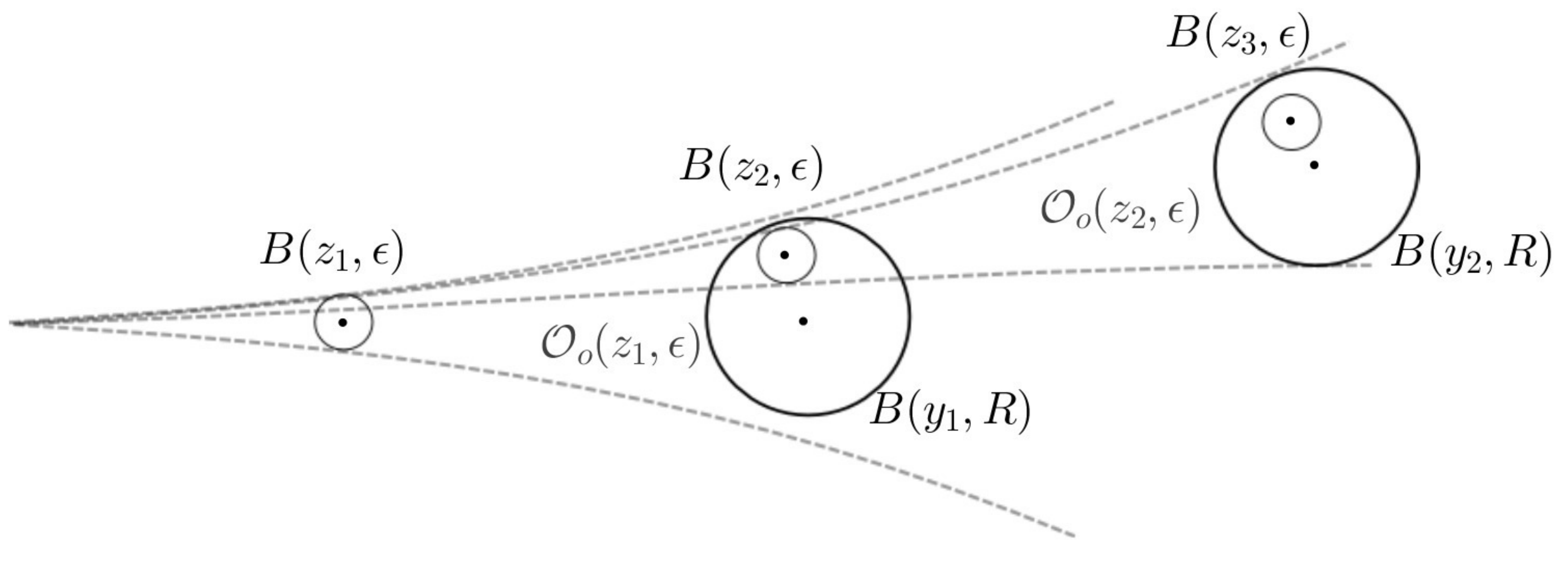}
\caption{}
\label{figure2}
\end{center}
\end{figure}

\noindent
By Arzel\`a-Ascoli theorem, up to subsequence, the sequence of segments $[z_1,z_n]$ converges to some geodesic ray $r$ uniformly on compact sets. By construction, $r$ meets each closed ball $\bar{B}(z_n,\epsilon)$ at some point $r(t_n)$ such that $\epsilon\leq t_{n+1}-t_n\leq R+K+\epsilon$ and $B(r(t_n),\epsilon)\subset B(z_n,2\epsilon)$ is a CAT(-1) neighborhood.
\end{proof}

\noindent
Now, we focus on the second step of the proof of Theorem \ref{thm:main}. Namely:

\begin{prop}\label{prop2}
Let $X$ be a CAT(0) space and $\gamma : \mathbb{R} \to X$ a geodesic line passing through infinitely many CAT(-1) points in such a way that the distance between any two consecutive such points is uniformly bounded from above and from zero and that the radii of the CAT(-1) neighborhoods are uniformly bounded away from zero. Then $\gamma$ is a contracting geodesic. 
\end{prop}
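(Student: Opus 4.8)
The plan is to show that $\gamma$ is contracting by establishing the standard equivalent property that geodesic segments with endpoints far from $\gamma$ have uniformly bounded projection onto $\gamma$; in fact I will argue via the \emph{slimness} criterion, showing that there is a constant $D$ such that any geodesic segment $\sigma$ with both endpoints on $\gamma$ stays within the $D$-neighborhood of $\gamma$ (which, for geodesic lines in a CAT(0) space, is equivalent to being contracting, cf.\ \cite{arXiv:1112.2666, BallmannBuyalo}). Let $c>0$ be a uniform lower bound for the radii of the CAT($-1$) neighborhoods around the special points $\gamma(t_n)$, and let $0<\ell_{\min}\le t_{n+1}-t_n\le\ell_{\max}$ be the uniform bounds on consecutive gaps.

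The key local estimate is a \emph{quantitative excess/fellow-travelling} bound at each CAT($-1$) point. Fix one special point $p=\gamma(t_n)$ and consider the sub-segment $\gamma_p=\gamma|_{[t_n-c/2,\,t_n+c/2]}$, which lies entirely inside the CAT($-1$) ball $B(p,c)$. First I would show: there exists $\eta=\eta(c)>0$ such that if $\sigma$ is any geodesic from $\gamma(a)$ to $\gamma(b)$ with $a< t_n-c/2$ and $b> t_n+c/2$, then $d(p,\sigma)\ge \eta$, unless $\sigma$ already fellow-travels $\gamma$ closely near $p$. Concretely: inside $B(p,c)$ the space is CAT($-1$), so a bigon-type comparison forces a geodesic entering and exiting $B(p,c/2)$ ``near'' $\gamma$ to bow away from $\gamma_p$ — but $\gamma_p$ is itself a geodesic segment, and in a CAT($-1$) (hence uniquely geodesic) space two geodesics with the same endpoints coincide, so any geodesic that is too close to $\gamma_p$ on a definite scale must actually pick up a definite \emph{angle} deficit, i.e.\ cannot be a geodesic of $X$ unless it stays uniformly close to $\gamma$. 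The clean way to package this is: using the Alexandrov angle and the CAT($-1$) inequality in $B(p,c)$, if a geodesic $\sigma$ of $X$ passes within distance $\rho<c/4$ of $p$, then near $p$ it must make angle $\ge \pi - \phi(\rho)$ with $\gamma$ at its closest point, with $\phi(\rho)\to 0$ as $\rho\to0$; consequently $\sigma$ enters and leaves the ball $B(p,c/2)$ at points whose $\gamma$-parameters differ by at most some $\psi(\rho)$, and $\sigma$ stays within $\psi(\rho)$ of $\gamma$ throughout $B(p,c/2)$. Equivalently, there is a scale $\rho_0\in(0,c/4)$ such that any geodesic of $X$ coming within $\rho_0$ of $p$ in fact $\rho_0$-fellow-travels $\gamma$ across the whole neighborhood $\gamma|_{[t_n-c/4,\,t_n+c/4]}$.

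With this local statement in hand, the global argument is a pigeonhole/telescoping: suppose $\sigma$ is a geodesic from $\gamma(a)$ to $\gamma(b)$ with $b-a$ large, and suppose toward a contradiction that $\sigma$ leaves the $D$-neighborhood of $\gamma$ for $D$ large. Between $\gamma(a)$ and $\gamma(b)$ there are at least $(b-a)/\ell_{\max}-1$ special points, each separated by at most $\ell_{\max}$; since $\sigma$ and $\gamma$ share endpoints, by convexity of the metric $\sigma$ lies in the $\operatorname{diam}$-controlled neighborhood... more precisely, I would use that $\sigma$ must come within $\rho_0$ of \emph{many} of the special points (because any CAT(0) geodesic between two points on $\gamma$ cannot avoid every ball $B(\gamma(t_n),\rho_0)$ once the $t_n$ are $\ell_{\max}$-dense — this itself needs the first Lemma of the excerpt, or a direct convexity argument bounding how long $\sigma$ can stay at distance $\ge\rho_0$ from $\gamma$). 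At each special point it approaches, the local statement forces $\sigma$ to $\rho_0$-fellow-travel a definite arc of $\gamma$; stitching these forced-fellow-travelling arcs together with the convexity of $t\mapsto d(\sigma(t),\gamma)$ shows $\sigma$ never escapes a bounded neighborhood of $\gamma$, giving $D\le D_0(c,\ell_{\min},\ell_{\max})$ and hence the slimness constant. The main obstacle I anticipate is the first part of the local estimate — turning ``close to a CAT($-1$) geodesic on a definite scale'' into ``fellow-travels on a definite scale'' with explicit control — since one must carefully juggle the fact that only a ball around $p$ is CAT($-1$) while $\sigma$ and $\gamma$ extend outside it; the cleanest route is probably to work entirely inside $B(p,c/2)$, compare the configuration (the two points where $\sigma$ meets $\partial B(p,c/2)$ together with $p$ and a nearby point of $\sigma$) with its comparison configuration in $\mathbb{H}^2$, and read off the bound from hyperbolic trigonometry, exactly the kind of estimate Lytchak's simplification (acknowledged in the paper) is presumably streamlining.
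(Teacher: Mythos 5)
Your reduction step is where the argument breaks down. You propose to verify contraction by showing that ``any geodesic segment $\sigma$ with both endpoints on $\gamma$ stays within the $D$-neighborhood of $\gamma$.'' But a CAT(0) space is uniquely geodesic, so any geodesic with both endpoints on the line $\gamma$ \emph{is} a subsegment of $\gamma$: the property you want to verify holds with $D=0$ for every geodesic line in every CAT(0) space (in particular for lines in $\mathbb{E}^2$, which are not contracting), so it cannot imply the conclusion. The criteria that are actually equivalent to contraction involve either projections of balls disjoint from $\gamma$ (the definition the paper works with directly) or the Morse property for \emph{quasi}-geodesics with endpoints on $\gamma$; if you meant the latter, then the convexity arguments you invoke (convexity of $t\mapsto d(\sigma(t),\gamma)$, the claim that $\sigma$ must pass near many special points) no longer apply, since quasi-geodesics in CAT(0) spaces need not track geodesics at all. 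The same vacuity infects your global step: ``any CAT(0) geodesic between two points on $\gamma$ cannot avoid every ball $B(\gamma(t_n),\rho_0)$'' is true only because such a geodesic equals the arc of $\gamma$.

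The local estimate is also not in usable shape. As stated --- ``if a geodesic $\sigma$ passes within distance $\rho<c/4$ of $p$, then near $p$ it must make angle $\geq \pi-\phi(\rho)$ with $\gamma$'' --- it is false: a geodesic crossing $\gamma$ orthogonally at $p$ passes at distance $0$ and makes angle $\pi/2$. The correct quantitative phenomenon is the contrapositive of what you are reaching for: a path that \emph{avoids} a definite ball around a CAT($-1$) point while its endpoints project to either side of that point must pick up a definite angle deficit there. The paper extracts exactly this by taking $x\notin\gamma$, a point $y$ with $d(x,y)<d(x,\gamma)$, and the projections $z,w$ of $x,y$; it triangulates the comparison quadrilateral of $Q(x,y,w,z)$, assigns hyperbolic comparison angles to small triangles sitting inside the CAT($-1$) balls and Euclidean comparison angles elsewhere, and applies the combinatorial Gauss--Bonnet formula: each CAT($-1$) point whose $\epsilon$-ball misses $[x,y]$ contributes curvature at most $4\arccos\bigl(\sqrt{\cosh(\epsilon)/(\cosh(\epsilon)+1)}\bigr)-\pi<0$, so the number of such points is bounded by a constant $\eta(\epsilon)$, and a short distance computation then bounds $d(z,w)$. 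You would need to replace both your reduction and your local lemma with something of this kind before the outline becomes a proof.
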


\noindent
Inspired from \cite{KapovichLeebCones}, our proof is based on the Gauss-Bonnet formula. We begin by recalling a few definitions.

\begin{definition}
Let $X$ be a polygonal complex. A \emph{corner} $(v,C)$ is the data of a vertex $v \in X$ and a polygon $C \subset X$ containing $v$. Given a vertex $v \in X$ (resp. a polygon $R \subset X$), we denote by $\mathrm{corner}(v)$ (resp. $\mathrm{corner}(R)$) the set of corners based at $v$ (resp. the set of corners supported by $R$). 
\end{definition}

\begin{definition}
An \emph{angled polygonal complex} $(X,\angle)$ is the data of a polygonal complex $X$ and a map
$$\angle : \{ \text{corners of $X$} \} \to \mathbb{R}.$$
The \emph{curvature of a vertex} $v \in X$ is defined as
$$\kappa(v)= 2\pi - \pi \cdot \chi \left( \mathrm{link}(v) \right) - \sum\limits_{c \in \mathrm{corner}(v)} \angle(v),$$
and the \emph{curvature of a polygon} $R \subset X$ as
$$\kappa(R)= \sum\limits_{c \in \mathrm{corner}(R)} \angle (c) - \pi \cdot | \partial R | +2 \pi.$$
It is worth noticing that, if $R$ is a triangle, then its curvature coincides with minus its \emph{deficiency}, denoted by $\mathrm{def}(R)$, ie., the difference between $\pi$ and the sum of its angles. 
\end{definition}

\noindent
As proved in \cite[Theorem 4.6]{McCammondWise}, this formalism allows us to recover a combinatorial version of the well-known Gauss-Bonnet formula. 

\medskip \noindent
\textbf{Combinatorial Gauss-Bonnet formula.} Let $(X,\angle)$ be an angled polygonal complex. Then
$$\sum\limits_{\text{$v$ vertex}} \kappa(v)+ \sum\limits_{\text{$R$ polygon}} \kappa(R) = 2 \pi \cdot \chi(X).$$

\medskip \noindent
Now we are ready to prove Proposition \ref{prop2}. 

\begin{proof}[Proof of Proposition \ref{prop2}.]
Let $x \notin \gamma$ and $y$ be two points such that $d(x,y)<d(x,\gamma)$, and let $z,w \in \gamma$ denote the projections of $x$ and $y$ onto $\gamma$ respectively. Fix some constants $\epsilon, L,R>0$ such that a ball of radius $\epsilon$ centered at some CAT(-1) point is CAT(-1) and such that the distance between any two consecutive CAT(-1) points along $\gamma$ is at most $L$ and at least $R$. Without loss of generality, we may suppose that $R > 2 \epsilon$. Suppose that there exist $N$ consecutive CAT(-1) points $x_1, \ldots, x_N \in \gamma$ between $z$ and $w$ such that $B(x_i,\epsilon) \cap [x,y] = \emptyset$ for every $1 \leq i \leq N$. For every $1 \leq i \leq N$, fix a point $y_i \in [z,x] \cup [x,y]\cup [y,w]$ whose projection onto $\gamma$ is $x_i$; notice that, because the projection of $[x,z]$ is reduced to the singleton $\{z \}$ and the projection of $[w,y]$ is reduced to the singleton $\{w \}$, necessarily $y_i \in [x,y]$. Finally, for every $1 \leq i \leq N$, let $a_i \in [x_{i-1},x_i]$, $b_i \in [x_i,y_i]$ and $c_i \in [x_{i},x_{i+1}]$ be the unique points of the corresponding segments at distance $\epsilon$ from $x_i$ (for convenience, we set $x_0=z$ and $x_{N+1}=w$). The configuration is summarised by Figure \ref{figure1}. 	
\begin{figure}
\begin{center}
\includegraphics[trim={0 0 11cm 13cm},clip,scale=0.71]{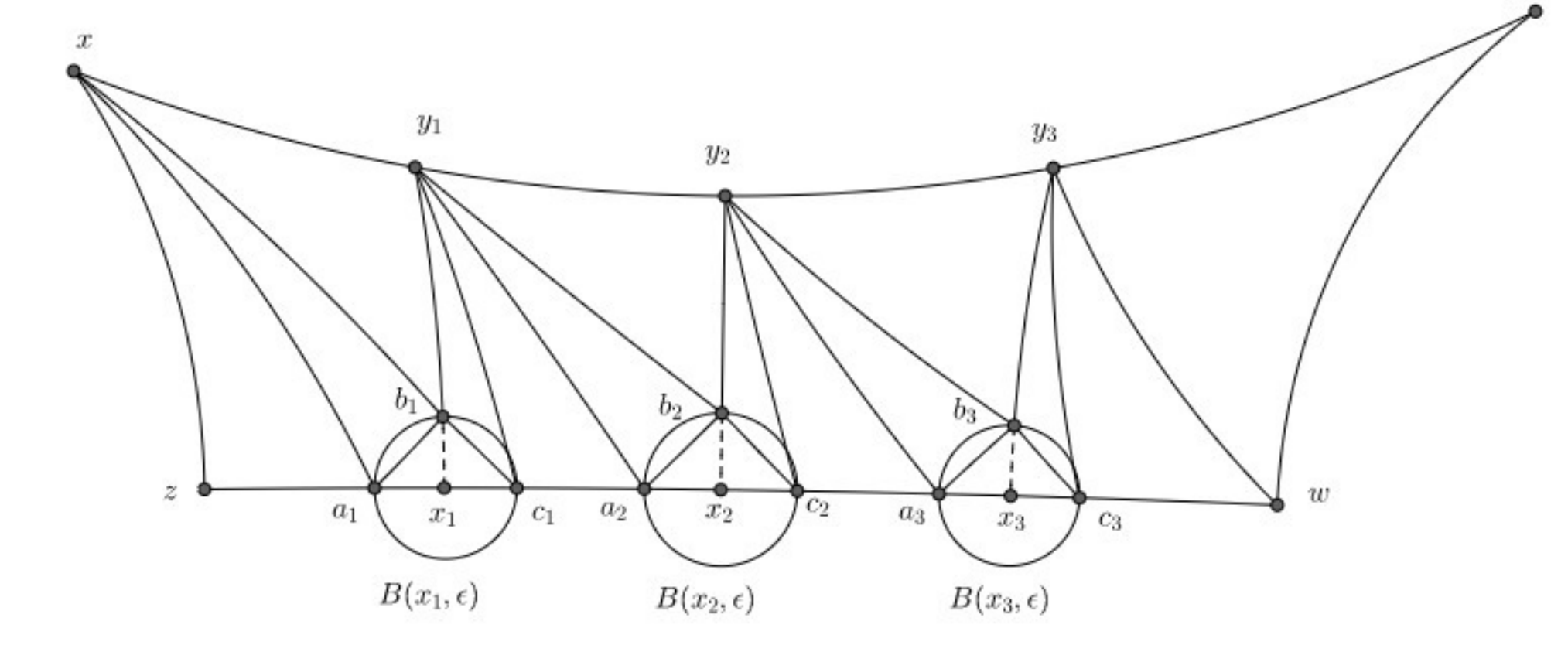}
\caption{}
\label{figure1}
\end{center}
\end{figure}

\medskip \noindent
Denote by $\Delta$ the union of the geodesics 

\begin{itemize}
	\item $[x,y]$, $[y,w]$, $[w,z]$ and $[z,x]$;
	\item $[y_i,b_i]$ for every $1 \leq i \leq N$;
	\item $[b_i,a_i]$ and $[b_i,c_i]$ for every $1 \leq i \leq N$;
	\item $[y_i,c_i]$, $[y_i,a_{i+1}]$ and $[y_i,b_{i+1}]$ for every $0 \leq i \leq N-1$ (setting $y_0=x$ and $x_0=z$).
\end{itemize}
Roughly speaking, $\Delta$ is a triangulation of the geodesic quadrilateral $Q(x,y,w,z)$. Now, consider a comparison quadrilateral $\overline{Q}(\bar{x},\bar{y}, \bar{w}, \bar{z})$ of $Q(x,y,w,z)$. For every $1 \leq i \leq N$, let $\overline{x}_i, \overline{y}_i, \overline{a}_i, \overline{c}_i$ denote the preimages of $x_i,y_i,a_i,c_i$ respectively under the comparison map $\overline{Q}(\bar{x},\bar{y}, \bar{w}, \bar{z}) \to Q(x,y,w,z)$. This map has a natural extension 
$$\overline{Q}(\bar{x},\bar{y}, \bar{w}, \bar{z}) \cup \bigcup\limits_{i=1}^N [\overline{y}_i, \overline{x}_i] \to Q(x,y,w,z) \cup \bigcup\limits_{i=1}^N [y_i,x_i],$$
sending each segment $[\overline{y}_i, \overline{x}_i]$ to the geodesic $[y_i,x_i]$ by an affine map. We denote by $\overline{b}_i$ the preimage of $b_i$ under this map, for every $1 \leq i \leq N$. By triangulating $\overline{Q}(\bar{x},\bar{y}, \bar{w}, \bar{z})$ as $Q(x,y,w,z)$, one gets a planar triangle complex $\overline{\Delta}$, and a map $f : \overline{\Delta} \to \Delta$ extending the comparison map $\overline{Q}(\bar{x},\bar{y}, \bar{w}, \bar{z}) \to Q(x,y,w,z)$. 

\medskip \noindent
From now on, the triangles $(\overline{c}_i,\overline{a}_i,\overline{b}_i)$ for $1 \leq i \leq N$, and their images under $f$, will be referred to as CAT(-1) triangles. Notice that a CAT(-1) triangle in $\Delta$ is contained into a ball of radius $\epsilon$ centered at a CAT(-1) point, so it lies in a CAT(-1) subspace. For every triangle $\delta$ of $\overline{\Delta}$, we assign angles to its corners by the following rules:
\begin{itemize}
	\item for non CAT(-1) triangles, we assign the angles of a comparison triangle in $\mathbb{E}^2$;
	\item for the CAT(-1) triangle $(\bar{a}_i,\bar{b}_i,\bar{c}_i)$ we assign to the vertex $\bar{a}_i$ (respectively $\bar{c}_i$) the corresponding angle in a comparison triangle in $\mathbb{H}^2$ for $(a_i,b_i,x_i)$ (respectively $(c_i,b_i,x_i)$) and to the vertex $\bar{b}_i$ the corresponding angle in a comparison triangle in $\mathbb{H}^2$ for $(a_i,b_i,c_i)$.
\end{itemize}
We emphasize that the angle $\angle_{\bar{c}_i} \left( \bar{a}_i, \bar{b}_i \right)$, for instance, denotes the angle as defined above, and not the corresponding angle in the Euclidean triangulation $\overline{\Delta}$.

\medskip \noindent
The plan is to apply the Gauss-Bonnet formula in order to bound the number $N$. So we need to investigate the curvatures of the vertices and triangles of $\overline{\Delta}$. 

\begin{claim}
The curvature of any vertex of $\overline{\Delta}$, different from $\bar{x},\bar{y},\bar{z}$ or the $\bar{b}_i$ for $1 \leq i \leq N$, is nonpositive.
\end{claim}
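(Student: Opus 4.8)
The plan is to analyze the vertices of $\overline{\Delta}$ case by case according to their role in the triangulation, using the fact that each interior vertex carries a full angle $2\pi$ in the plane but our assigned angles differ from the Euclidean comparison angles only at the corners of CAT(-1) triangles. Recall the curvature of a vertex $v$ is $\kappa(v) = 2\pi - \pi \chi(\mathrm{link}(v)) - \sum_{c \in \mathrm{corner}(v)} \angle(c)$; for an interior vertex of a planar triangle complex one has $\pi \chi(\mathrm{link}(v)) = 0$ wait — more precisely $\mathrm{link}(v)$ is a circle so $\chi = 0$ and $\kappa(v) = 2\pi - \sum \angle(c)$, while for a boundary vertex $\mathrm{link}(v)$ is an arc so $\chi = 1$ and $\kappa(v) = \pi - \sum \angle(c)$.

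First I would dispose of the interior vertices. The only interior vertices of $\overline{\Delta}$ are the $\overline{b}_i$ and the $\overline{y}_i$ for $1 \le i \le N-1$ (and possibly the points $\overline{x}_i$ are on the boundary segment, hence not interior; I would double-check from Figure \ref{figure1} that $\overline{x}_i$ lies on $[\bar{z},\bar{w}]$). The $\overline{b}_i$ are excluded by hypothesis. For $\overline{y}_i$ with $1 \le i \le N-1$: the corners at $\overline{y}_i$ come from the triangles $(\overline{y}_{i-1},\overline{b}_i,\overline{y}_i)$-type pieces and the triangles on the other side; all of these are non-CAT(-1) triangles, so they carry Euclidean comparison angles. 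The sum of the Euclidean comparison angles around $\overline{y}_i$ is \emph{at least} $2\pi$ — this is the standard fact that the Euclidean comparison angle dominates the Alexandrov angle, so each comparison angle at $y_i$ is $\ge$ the Alexandrov angle, and the Alexandrov angles around the interior point $y_i \in [x,y]$ sum to at least $2\pi$ (indeed exactly $2\pi$ is the sum of the two straight angles along $[x,y]$ plus the bits, but one only needs the $\ge 2\pi$ bound). Hence $\kappa(\overline{y}_i) \le 2\pi - 2\pi = 0$.

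Next the boundary vertices other than $\bar x, \bar y, \bar z$ (and $\bar w$ — I note $\bar w$ is conspicuously absent from the claim's list, so presumably the configuration forces $w = x_{N+1}$ to behave like a generic boundary vertex, or the claim means $\bar w$ too; I would state it with $\bar w$ included if needed, or check that the angle at $\bar w$ works out, e.g.\ because $d(x,y) < d(x,\gamma) \le d(y,\gamma)$ forces the projection of $[w,y]$ to be the singleton $\{w\}$, giving a right angle there). The relevant boundary vertices are: the $\overline{x}_i$ on $[\bar z,\bar w]$, the $\overline{a}_i$, and the $\overline{c}_i$. At an $\overline{x}_i$ on the boundary segment $[\bar z,\bar w]$, the corners are the Euclidean angles of the two small triangles $(\overline{a}_i, \overline{x}_i, \cdot)$ and $(\overline{c}_i,\overline{x}_i,\cdot)$ wait — actually $x_i$ is a CAT(-1) point \emph{on} $\gamma$, and the segment $[\bar z, \bar w]$ passes straight through it, so the two sides along $\gamma$ make angle $\pi$; the small triangles $(\overline{y}_{i-1}, \overline{a}_i, ?)$ and so on fill the half-disk, and by the same comparison-angle-dominates-Alexandrov-angle argument the total is $\ge \pi$, giving $\kappa \le \pi - \pi = 0$. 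At $\overline{a}_i$ (resp.\ $\overline{c}_i$): this vertex is shared between one CAT(-1) triangle $(\overline{a}_i,\overline{b}_i,\overline{c}_i)$, where we assigned the \emph{hyperbolic} comparison angle for $(a_i, b_i, x_i)$, and some non-CAT(-1) Euclidean triangles filling the rest of the half-disk. The hyperbolic comparison angle at $a_i$ for the triangle $(a_i,b_i,x_i)$ is \emph{at least} the Euclidean comparison angle for the same side lengths (hyperbolic triangles are thinner, angles larger), which in turn is $\ge$ the Alexandrov angle $\angle_{a_i}(b_i,x_i)$ in $X$. So replacing the Euclidean comparison angle by the hyperbolic one only \emph{increases} the angle sum at $\overline a_i$; combined with the Euclidean angles of the other triangles, which dominate their respective Alexandrov angles, the total angle sum at $\overline a_i$ is $\ge$ the sum of Alexandrov angles $\ge \pi$ (the point $a_i$ lies on the geodesic $[x_{i-1},x_i]$, giving a straight angle). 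Hence $\kappa(\overline a_i) \le 0$, and symmetrically for $\overline c_i$.

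The main obstacle I anticipate is bookkeeping: correctly identifying from the (not fully reproduced) Figure \ref{figure1} exactly which triangles meet at each vertex and which of them are CAT(-1), and making sure that at the CAT(-1)-triangle vertices $\overline a_i, \overline c_i$ the substitution of hyperbolic for Euclidean angles goes the favorable direction (it does, since $M^2_{-1}$ triangles have larger angles than $\mathbb{E}^2$ triangles with the same side lengths). A secondary subtlety is the status of $\bar w$ and whether the degenerate projections (the segments $[x,z]$ and $[y,w]$ projecting to points) are being used to guarantee right angles at $\bar z$ and $\bar w$; I would make explicit that $\bar z$ is excluded precisely because the angle there is not controlled from below by $\pi$ in the needed way, whereas at all the listed-as-good vertices the relevant geodesic passes straight through, supplying the $\pi$ (or $2\pi$) lower bound on the Alexandrov-angle sum. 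Modulo these checks, every non-exceptional vertex has $\kappa \le 0$ by the uniform principle: assigned angles $\ge$ Euclidean comparison angles $\ge$ Alexandrov angles, and the Alexandrov angles around the vertex sum to at least $\pi$ (boundary) or $2\pi$ (interior).
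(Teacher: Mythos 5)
Your overall strategy coincides with the paper's: at each non-exceptional vertex the assigned angles dominate the corresponding Alexandrov angles of $X$, and the Alexandrov angles at a point lying in the interior of a geodesic ($\gamma$ at $\bar{a}_i,\bar{c}_i$, the side $[x,y]$ at $\bar{y}_i$) sum to at least $\pi$, which is exactly what a boundary vertex needs. But two of your justifications are wrong as stated, and the first is a genuine gap. Your reason for the key inequality at $\bar{a}_i$ and $\bar{c}_i$ --- that ``$M^2_{-1}$ triangles have larger angles than $\mathbb{E}^2$ triangles with the same side lengths'' --- is backwards: comparison angles are nondecreasing in $\kappa$, so for fixed side lengths the hyperbolic angle is \emph{smaller} than the Euclidean one (a large equilateral triangle has angles tending to $0$ in $\mathbb{H}^2$ but equal to $\pi/3$ in $\mathbb{E}^2$). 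The chain ``assigned $\geq$ Euclidean comparison $\geq$ Alexandrov'' therefore breaks at its first link, and your ``uniform principle'' at the end is false for the hyperbolic-assigned corners. The correct reason the assigned angle at $\bar{a}_i$ (the $\mathbb{H}^2$-comparison angle of $(a_i,b_i,x_i)$ at $a_i$) dominates $\angle_{a_i}(b_i,c_i)=\angle_{a_i}(b_i,x_i)$ is that the triangle $(a_i,b_i,x_i)$ lies entirely in the CAT($-1$) ball $B(x_i,\epsilon)$, so the CAT($-1$) comparison inequality applies to it directly; this is precisely where the hypothesis on CAT($-1$) neighborhoods enters the claim, and it cannot be replaced by a monotonicity-in-$\kappa$ argument.

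Second, the $\bar{y}_i$ are not interior vertices: $y_i$ lies on the side $[x,y]$ of the quadrilateral $Q(x,y,w,z)$, so the triangles of $\overline{\Delta}$ fill only a half-disk at $\bar{y}_i$ and the angle sum there is only guaranteed to be at least $\pi$, not at least $2\pi$ as you assert. This slip is self-cancelling --- for a boundary vertex $\kappa=\pi-\sum\angle$, so the bound $\sum\angle\geq\pi$ suffices --- but the argument as written for $\bar{y}_i$ rests on a bound that the configuration does not supply. (Your observation that $\bar{w}$ ought to be listed alongside $\bar{x},\bar{y},\bar{z}$ is apt: the angle at $w$ is only bounded below by $\pi/2$, and the paper's final Gauss--Bonnet count indeed budgets $\pi$ of curvature for each of the four corner vertices.)
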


\noindent
Fix some $1 \leq i \leq N$. We want to compute the curvature of $\bar{c}_i$. Notice that
$$\angle_{\overline{c}_i}( \overline{a}_i, \overline{b}_{i}) + \angle_{\overline{c}_i}( \overline{b}_{i}, \overline{y}_i) + \angle_{\overline{c}_i}( \overline{y}_i, \overline{a}_{i+1})$$
is at least
$$\angle_{c_i}( a_i, b_{i}) + \angle_{c_i}( b_{i}, y_i) + \angle_{c_i}( y_i, a_{i+1}) \geq \angle_{c_i}(a_i,a_{i+1})= \pi.$$
Consequently, $\kappa(\overline{c_i}) \leq 0$. The same argument implies that $\kappa(\bar{y}_i)$ and $\kappa(\bar{a}_i)$ are nonpositive. This concludes the proof of our claim.

\medskip \noindent
Let $(a_i',b_i',x_i')$ and $(c_i'',b_i'',x_i'')$ be comparison triangles in $\mathbb{H}^2$ of $(a_i,b_i,x_i)$ and $(c_i,b_i,x_i)$ respectively.  These are isosceles triangles with angles at least $\frac{\pi}{2}$ at $x_i'$ and $x_i''$ respectively. In particular, by definition of the angles in $\bar{\Delta}$,
$$\angle_{\bar{a}_i}(\bar{b}_i,\bar{c}_i)=\angle_{a_i'}(b_i',x_i')=\angle_{b_i'}(a_i',x_i'),$$
$$\angle_{\bar{c}_i}(\bar{b}_i,\bar{a}_i)=\angle_{c_i''}(b_i'',x_i'')=\angle_{b_i''}(c_i'',x_i''),$$
where $\angle_{a_i'}(b_i',x_i')$, $\angle_{b_i'}(a_i',x_i')$, $\angle_{c_i''}(b_i'',x_i'')$, $\angle_{b_i''}(c_i'',x_i'')$ are the angles measured in $\mathbb{H}^2$.
\begin{claim}\label{claim2}
For $1 \leq i \leq N$, we have $\kappa(\bar{b}_i)\leq\angle_{b_i'}(a_i',x_i')+\angle_{b_i''}(x_i'',c_i'')-\angle_{\bar{b}_i}(\bar{a}_i,\bar{c}_i)$.
\end{claim}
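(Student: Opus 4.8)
The plan is to compute $\kappa(\bar b_i)$ directly from the local combinatorics of $\overline{\Delta}$ around $\bar b_i$, and then to bound the resulting sum of angles by hand. Since $b_i$ lies in the interior of $Q(x,y,w,z)$, the vertex $\bar b_i$ is an interior vertex of the triangulated disc $\overline{\Delta}$, so $\chi(\mathrm{link}(\bar b_i))=0$; and the triangles of $\overline{\Delta}$ incident to $\bar b_i$ are precisely the $i$-th CAT$(-1)$ triangle $(\bar a_i,\bar b_i,\bar c_i)$ together with $(\bar b_i,\bar c_i,\bar y_i)$, $(\bar b_i,\bar y_i,\bar y_{i-1})$ and $(\bar b_i,\bar y_{i-1},\bar a_i)$ (reading $\bar y_0=\bar x$ when $i=1$, and symmetrically at the other end). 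Recalling that the angles of $\overline{\Delta}$ in these last three triangles are the angles at $b_i$ of Euclidean comparison triangles for $(b_i,c_i,y_i)$, $(b_i,y_i,y_{i-1})$ and $(b_i,y_{i-1},a_i)$, the definition of $\kappa$ gives
$$\kappa(\bar b_i)=2\pi-\angle_{\bar b_i}(\bar a_i,\bar c_i)-\Big(\angle_{\bar b_i}(\bar c_i,\bar y_i)+\angle_{\bar b_i}(\bar y_i,\bar y_{i-1})+\angle_{\bar b_i}(\bar y_{i-1},\bar a_i)\Big).$$
So the claim reduces to the inequality $\angle_{\bar b_i}(\bar c_i,\bar y_i)+\angle_{\bar b_i}(\bar y_i,\bar y_{i-1})+\angle_{\bar b_i}(\bar y_{i-1},\bar a_i)\geq 2\pi-\angle_{b_i'}(a_i',x_i')-\angle_{b_i''}(x_i'',c_i'')$.

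To prove this I would chain three elementary inequalities. First, since $X$ is CAT$(0)$, each Euclidean comparison angle is at least the corresponding Alexandrov angle in $X$, so the left-hand side is $\geq\angle_{b_i}(c_i,y_i)+\angle_{b_i}(y_i,y_{i-1})+\angle_{b_i}(y_{i-1},a_i)$. Second, $b_i$ lies in the interior of the geodesic $[x_i,y_i]$, so $\angle_{b_i}(x_i,y_i)=\pi$; applying the triangle inequality for Alexandrov angles to $x_i,c_i,y_i$, and twice to $x_i,a_i,y_{i-1},y_i$, one gets $\angle_{b_i}(c_i,y_i)\geq\pi-\angle_{b_i}(x_i,c_i)$ and $\angle_{b_i}(y_{i-1},a_i)\geq\pi-\angle_{b_i}(x_i,a_i)-\angle_{b_i}(y_{i-1},y_i)$; summing these (the two copies of $\angle_{b_i}(y_{i-1},y_i)$ cancelling) bounds the left-hand side below by $2\pi-\angle_{b_i}(x_i,a_i)-\angle_{b_i}(x_i,c_i)$. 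Third, the triangles $(a_i,b_i,x_i)$ and $(c_i,b_i,x_i)$ are sub-triangles of the $i$-th CAT$(-1)$ triangle, hence lie in the CAT$(-1)$ ball around $x_i$ (their vertices are within $\epsilon$ of $x_i$, and geodesics between them stay in $\bar{B}(x_i,\epsilon)$ by convexity of $d(\cdot,x_i)$); so the comparison inequality for angles in CAT$(-1)$ spaces gives $\angle_{b_i}(x_i,a_i)\leq\angle_{b_i'}(a_i',x_i')$ and $\angle_{b_i}(x_i,c_i)\leq\angle_{b_i''}(x_i'',c_i'')$. Putting the three steps together yields the required lower bound, and substituting it into the formula for $\kappa(\bar b_i)$ gives $\kappa(\bar b_i)\leq\angle_{b_i'}(a_i',x_i')+\angle_{b_i''}(x_i'',c_i'')-\angle_{\bar b_i}(\bar a_i,\bar c_i)$.

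The main obstacle is organisational rather than conceptual: one must identify correctly the four triangles of $\overline{\Delta}$ surrounding $\bar b_i$ (this is exactly what fixes the formula for $\kappa(\bar b_i)$), and keep the two comparison regimes apart — Alexandrov angle $\leq$ Euclidean comparison angle for the three outer angles at $b_i$ (from CAT$(0)$), and Alexandrov angle $\leq$ hyperbolic comparison angle for $\angle_{b_i}(x_i,a_i)$ and $\angle_{b_i}(x_i,c_i)$ (from the CAT$(-1)$ ball around $x_i$). Once the local picture and the role of the relation $\angle_{b_i}(x_i,y_i)=\pi$ are in place, the remaining estimates are routine, and no finer information about the space of directions at $b_i$ is needed.
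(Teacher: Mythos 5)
Your proof is correct and follows essentially the same route as the paper's: the paper bundles the three Euclidean comparison angles and the two hyperbolic comparison angles $\angle_{b_i'}(a_i',x_i')$, $\angle_{b_i''}(c_i'',x_i'')$ into a single quantity $u$ and shows $u\geq 2\pi$ by comparing each term to the corresponding Alexandrov angle and applying the angle triangle inequality through $\angle_{b_i}(x_i,y_i)=\pi$, which is exactly your chain of inequalities written in a different order. The only cosmetic difference is that you pass through the intermediate bound $2\pi-\angle_{b_i}(x_i,a_i)-\angle_{b_i}(x_i,c_i)$ before invoking the CAT($-1$) comparison in $\bar{B}(x_i,\epsilon)$, whereas the paper invokes it first.
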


\noindent
Fix some $1 \leq i \leq N$. The same argument as before shows that 
$$u:=\angle_{\overline{b}_i}( \overline{y}_i, \overline{y}_{i-1}) + \angle_{\overline{b}_i}( \overline{y}_{i-1}, \overline{a}_i) + \angle_{b_i'}(a_i', x_i')+\angle_{\overline{b}_i}( \overline{y}_{i}, \overline{c}_i) + \angle_{b_i''}(c_i'', x_i'')$$
is at least $2\pi$. Since
$$\kappa(\bar{b}_i)=2\pi-(u-\angle_{b_i'}(a_i',x_i')-\angle_{b_i''}(x_i'',c_i'')+\angle_{\bar{b}_i}(\bar{a}_i,\bar{c}_i)),$$
this proves the claim.

\medskip \noindent
Next, notice that the curvature of a triangle of $\overline{\Delta}$ which is not CAT(-1) is zero, since its angles come from a Euclidean triangle. Therefore,
$$\sum\limits_{\text{$\delta$ triangle}} \kappa(\delta) = \sum\limits_{\text{$\delta$ CAT(-1)}} \kappa(\delta).$$

\noindent
Let $\delta_i$ be the triangle $(\bar{a}_i,\bar{b}_i,\bar{c}_i)$ in $\bar{\Delta}$. Recall that the \emph{deficiency} $\mathrm{def}(\delta_i)$ of $\delta_i$ is the difference between $\pi$ and the sum of its angles, that is
\begin{align*}
\mathrm{def}(\delta_i)&=\pi-(\angle_{\bar{a}_i}(\bar{b}_i,\bar{c}_i)+\angle_{\bar{c}_i}(\bar{a}_i,\bar{b}_i)+\angle_{\bar{b}_i}(\bar{a}_i,\bar{c}_i))\\
&=\pi-(\angle_{a_i'}(b_i',x_i')+\angle_{c_i''}(b_i'',x_i'') +\angle_{\bar{b}_i}(\bar{a}_i,\bar{c}_i))\\
&=\pi-(\angle_{b_i'}(a_i',x_i')+\angle_{b_i''}(c_i'',x_i'') +\angle_{\bar{b}_i}(\bar{a}_i,\bar{c}_i)).
\end{align*}

\begin{claim}
$\displaystyle \sum\limits_{i=1}^N(\kappa(\bar{b}_i)-\mathrm{def}(\delta_i))\leq  -N\left(\pi-4\arccos \left( \sqrt{\frac{\cosh(\epsilon)}{\cosh(\epsilon)+1}} \right)\right)$.
\end{claim}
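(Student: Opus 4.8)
The plan is to push Claim~\ref{claim2} and the identity for $\mathrm{def}(\delta_i)$ displayed just above into the summand, observe a cancellation, and reduce the whole statement to a single elementary estimate in $\mathbb{H}^2$. Concretely, I would add the inequality of Claim~\ref{claim2},
$$\kappa(\bar{b}_i) \le \angle_{b_i'}(a_i',x_i') + \angle_{b_i''}(x_i'',c_i'') - \angle_{\bar{b}_i}(\bar{a}_i,\bar{c}_i),$$
to the identity $\mathrm{def}(\delta_i) = \pi - \big(\angle_{b_i'}(a_i',x_i') + \angle_{b_i''}(x_i'',c_i'') + \angle_{\bar{b}_i}(\bar{a}_i,\bar{c}_i)\big)$ obtained above; the two copies of $\angle_{\bar{b}_i}(\bar{a}_i,\bar{c}_i)$ cancel, leaving
$$\kappa(\bar{b}_i) - \mathrm{def}(\delta_i) \le 2\,\angle_{b_i'}(a_i',x_i') + 2\,\angle_{b_i''}(x_i'',c_i'') - \pi.$$
So it is enough to show that each of these two base angles is at most $\arccos\!\big(\sqrt{\cosh(\epsilon)/(\cosh(\epsilon)+1)}\big)$, and then to sum over $1 \le i \le N$.

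For the base angle estimate I would use the two facts already recorded above: the triangle $(a_i',b_i',x_i')$ is isosceles in $\mathbb{H}^2$, with the two sides issued from $x_i'$ of length $\epsilon$, and its apex angle $\theta := \angle_{x_i'}(a_i',b_i')$ is at least $\pi/2$ (hence the two base angles agree). Writing $d$ for the base length, the hyperbolic law of cosines gives $\cosh d = \cosh^2(\epsilon) - \sinh^2(\epsilon)\cos\theta \ge \cosh^2(\epsilon)$, whence $\sinh d \ge \sqrt{\cosh^4(\epsilon)-1} = \sinh(\epsilon)\sqrt{\cosh^2(\epsilon)+1}$; then the hyperbolic law of sines yields $\sin\angle_{b_i'}(a_i',x_i') = \sinh(\epsilon)\sin\theta/\sinh d \le \sinh(\epsilon)/\sinh d \le 1/\sqrt{\cosh^2(\epsilon)+1} \le 1/\sqrt{\cosh(\epsilon)+1}$, using $\cosh^2(\epsilon)\ge\cosh(\epsilon)$ at the end. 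Since a hyperbolic triangle with apex angle $\ge \pi/2$ has its two (equal) remaining angles below $\pi/4$, this converts into $\cos\angle_{b_i'}(a_i',x_i') \ge \sqrt{\cosh(\epsilon)/(\cosh(\epsilon)+1)}$, i.e. $\angle_{b_i'}(a_i',x_i') \le \arccos\!\big(\sqrt{\cosh(\epsilon)/(\cosh(\epsilon)+1)}\big)$, and the same bound holds for $\angle_{b_i''}(x_i'',c_i'')$.

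Substituting these two bounds into the previous display gives $\kappa(\bar{b}_i) - \mathrm{def}(\delta_i) \le 4\arccos\!\big(\sqrt{\cosh(\epsilon)/(\cosh(\epsilon)+1)}\big) - \pi$ for every $i$, and adding over $1 \le i \le N$ produces exactly the asserted inequality. I do not expect a genuine obstacle here once Claim~\ref{claim2} is available: the only point requiring care is the direction of the hyperbolic trigonometry — one wants an \emph{upper} bound on a base angle extracted from a \emph{lower} bound on the side opposite to it — and for that purpose the crude steps $\sin\theta \le 1$ and $\cosh^2(\epsilon) \ge \cosh(\epsilon)$ are harmless, since they merely relax the final constant, while the isosceles shape together with $\theta \ge \pi/2$ are precisely the two inputs that make the estimate close.
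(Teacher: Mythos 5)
Your proof is correct and follows essentially the same route as the paper's: the cancellation of the two copies of $\angle_{\bar{b}_i}(\bar{a}_i,\bar{c}_i)$ when adding Claim \ref{claim2} to the deficiency identity, reducing everything to the bound $\angle_{b_i'}(a_i',x_i'),\angle_{b_i''}(x_i'',c_i'')\leq \arccos\bigl(\sqrt{\cosh(\epsilon)/(\cosh(\epsilon)+1)}\bigr)$, is exactly the paper's argument. The only (harmless) difference is the trigonometry used for that bound: the paper applies the second hyperbolic law of cosines of Lemma \ref{trigo} directly to the isosceles triangle, using $\cos\angle_{x_i'}(a_i',b_i')\leq 0$ and $d(a_i,b_i)\geq\epsilon$ to get $\cos\angle_{b_i'}(a_i',x_i')\geq\sqrt{\cosh(\epsilon)/(1+\cosh(\epsilon))}$ in one step, whereas you reach the same constant via the first law of cosines combined with the law of sines and the relaxations $\sin\theta\leq 1$, $\cosh^2(\epsilon)\geq\cosh(\epsilon)$.
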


\noindent
We will use the following hyperbolic trigonometry formula:
\begin{lemma}\label{trigo}
\emph{\cite[Theorem 2.2.1(ii)]{BuserGeometry}}
Let $\Delta$ be a geodesic triangle in $\mathbb{H}^2$ with sides of length $a,b,c$ and respective opposite angles $\alpha,\beta,\gamma$. Then
$$\cos(\gamma)=\sin(\alpha)\sin(\beta)\cosh(c)-\cos(\alpha)\cos(\beta).$$
\end{lemma}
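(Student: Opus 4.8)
The statement is the second (or \emph{dual}) hyperbolic law of cosines, and the cleanest route is to derive it from the first hyperbolic law of cosines. The plan is as follows. First I would take as starting point the first law of cosines, $\cosh a = \cosh b \cosh c - \sinh b \sinh c \cos\alpha$ together with its two cyclic permutations; this identity may be assumed known, or established directly in the hyperboloid model $\mathbb{H}^2 \subset \mathbb{R}^{2,1}$ by writing the three vertices as unit timelike vectors $A,B,C$ with $\langle B,C\rangle = -\cosh a$, $\langle C,A\rangle = -\cosh b$, $\langle A,B\rangle = -\cosh c$, and reading off the angle at a vertex from the Minkowski inner product of the two unit tangent vectors there. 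Solving for the cosines yields
\[ \cos\gamma = \frac{\cosh a \cosh b - \cosh c}{\sinh a \sinh b}, \qquad \cos\alpha = \frac{\cosh b \cosh c - \cosh a}{\sinh b \sinh c}, \]
together with the analogous expression for $\cos\beta$ obtained by cyclic permutation.

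Next I would compute the sines. Setting $x=\cosh a$, $y = \cosh b$, $z = \cosh c$ and using $\sin^2 = 1 - \cos^2$, a short expansion gives
\[ \sin^2\alpha = \frac{\sinh^2 b \sinh^2 c - (\cosh b\cosh c - \cosh a)^2}{\sinh^2 b \sinh^2 c} = \frac{P}{\sinh^2 b \sinh^2 c}, \qquad P := 1 + 2xyz - x^2 - y^2 - z^2. \]
The crucial observation is that the numerator $P$ is \emph{symmetric} in $a,b,c$; hence $\sin\alpha = \sqrt{P}/(\sinh b \sinh c)$ and, by symmetry, $\sin\beta = \sqrt{P}/(\sinh a \sinh c)$, the positive root being correct since $0 < \alpha,\beta < \pi$. (En passant this yields the hyperbolic law of sines, since $\sinh a / \sin\alpha = \sinh a \sinh b \sinh c / \sqrt{P}$ is symmetric.)

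Finally I would substitute into the right-hand side of the claimed identity. Using the formulas above,
\[ \sin\alpha \sin\beta \cosh c - \cos\alpha\cos\beta = \frac{P\,\cosh c - (\cosh b\cosh c - \cosh a)(\cosh c\cosh a - \cosh b)}{\sinh a \sinh b \sinh^2 c}, \]
so everything reduces to the polynomial identity $Pz - (yz - x)(zx - y) = (xy - z)(z^2 - 1)$, both sides of which expand to $z + xyz^2 - z^3 - xy$. Once this is checked, the numerator above equals $(xy-z)\sinh^2 c$, and therefore
\[ \sin\alpha\sin\beta\cosh c - \cos\alpha\cos\beta = \frac{(xy - z)\sinh^2 c}{\sinh a \sinh b \sinh^2 c} = \frac{xy - z}{\sinh a \sinh b} = \cos\gamma, \]
which is exactly the assertion. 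The main difficulty here is organisational rather than deep: one must establish the first law cleanly in the chosen model and keep track of the sign of the square root for $\sin\alpha$; once the symmetric quantity $P$ is isolated, the remaining verification is a one-line polynomial check. A more conceptual alternative, which I would mention, is to pass to the Gram matrix of $A,B,C$ in $\mathbb{R}^{2,1}$: its adjugate is, up to normalisation, the Gram matrix of the spacelike poles of the three sides, and this duality converts the first law of cosines into the second directly, explaining why the two laws have the same shape with the roles of sides and angles interchanged.
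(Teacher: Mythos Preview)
Your derivation is correct: the passage from the first hyperbolic law of cosines to the second via the symmetric quantity $P=1+2xyz-x^2-y^2-z^2$ is standard and you have carried out the algebra cleanly, including the polynomial identity $Pz-(yz-x)(zx-y)=(xy-z)(z^2-1)$ and the sign choice for $\sin\alpha,\sin\beta$.

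As for comparison with the paper: there is nothing to compare. The paper does not prove this lemma at all; it simply quotes it as \cite[Theorem 2.2.1(ii)]{BuserGeometry} and uses it as a black box in the proof of Proposition~\ref{prop2}. Your write-up therefore supplies strictly more than the paper does on this point. If anything, your closing remark about the Gram-matrix/duality interpretation is a nice conceptual complement, but for the purposes of the paper the bare citation suffices.
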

\noindent
As a consequence of Claim \ref{claim2},
\begin{align*}
\kappa(\bar{b}_i)-\mathrm{def}(\delta_i)&\leq \angle_{b_i'}(a_i',x_i')+\angle_{b_i''}(x_i'',c_i'')-\angle_{\bar{b}_i}(\bar{a}_i,\bar{c}_i) +\angle_{b_i'}(a_i',x_i')+\angle_{b_i''}(c_i'',x_i'') \\ & \hspace{0.5cm} +\angle_{\bar{b}_i}(\bar{a}_i,\bar{c}_i)-\pi\\
&\leq 2(\angle_{b_i'}(a_i',x_i')+\angle_{b_i''}(x_i'',c_i''))-\pi.
\end{align*}
\noindent
By applying Lemma \ref{trigo} to the triangle $(a_i',b_i',x_i')$, we deduce that 
$$\begin{array}{lcl} 0 & \geq & \cos \angle_{x_i'}(a_i',b_i') \\ \\ & \geq & \sin \angle_{a_i'}(b_i',x_i') \sin \angle_{b_i'}(a_i',x_i') \cosh( d(a_i',b_i')) - \cos \angle_{a_i'}(b_i',x_i') \cos \angle_{b_i'}(a_i',x_i') \\ \\ & \geq & \left( 1- \cos^2 \angle_{b_i'}(a_i',x_i') \right) \cosh(d(a_i,b_i)) - \cos^2 \angle_{b_i'}(a_i',x_i') \end{array}$$
So we get, since $\angle_{b_i'}(a_i',x_i')\leq\frac{\pi}{2}$, that
$$\cos\angle_{b_i'}(a_i',x_i')\geq \sqrt{\frac{\cosh(d(a_i,b_i))}{1+\cosh(d(a_i,b_i))}}.$$
Notice that $d(a_i,b_i)$ is greater than $\epsilon$ since the triangle $(a_i',b_i',x_i')$ is isoscele with angle at least $\frac{\pi}{2}$ at $x_i'$ and $d(x_i',a_i')=\epsilon$. Also notice that $x\mapsto\frac{\cosh(x)}{\cosh(x)+1} $ is increasing. Therefore, since cosinus is a decreasing function on $[0,\pi]$, we obtain that
$$\angle_{b_i'}(a_i',x_i')\leq \arccos\left(\sqrt{\frac{\cosh(\epsilon)}{1+\cosh(\epsilon)}}\right).$$
Similarly, one has 
$$\angle_{b_i''}(c_i'',x_i'')\leq \arccos\left(\sqrt{\frac{\cosh(\epsilon)}{1+\cosh(\epsilon)}}\right).$$
Thus,
$$\kappa(\bar{b}_i)-\mathrm{def}(\delta_i)\leq 4\arccos\left(\sqrt{\frac{\cosh(\epsilon)}{1+\cosh(\epsilon)}}\right)-\pi.$$
This concludes the proof of the claim.

\medskip \noindent
Finally, by applying the Gauss-Bonnet formula to $\overline{\Delta}$, one deduces that
$$2\pi=\sum\limits_{\text{$v$ vertex}} \kappa(v)+ \sum\limits_{\text{$\delta$ triangle}} \kappa(\delta)\leq 4\pi-N\left(\pi-4\arccos \left( \sqrt{\frac{\cosh(\epsilon)}{\cosh(\epsilon)+1}} \right) \right),$$
hence
$$N\left(\pi-4\arccos \left( \sqrt{\frac{\cosh(\epsilon)}{\cosh(\epsilon)+1}} \right)\right)\leq 2 \pi.$$
Notice that $\pi-4\arccos \left( \sqrt{\frac{\cosh(\epsilon)}{\cosh(\epsilon)+1}} \right)$ is strictly positive (or equivalently, $\frac{\cosh(\epsilon)}{\cosh(\epsilon)+1}$ is strictly greater than $\frac{1}{2}$) since $x\mapsto\frac{\cosh(x)}{\cosh(x)+1}$ is increasing and takes value $\frac 12$ at zero. Hence,
$$N\leq\eta(\epsilon):=\frac{2 \pi}{\pi-4\arccos \left( \sqrt{\frac{\cosh(\epsilon)}{\cosh(\epsilon)+1}} \right) }.$$

\noindent
So far, we have proved that, given our points $x \notin \gamma$ and $y$ such that $d(x,y)<d(x,\gamma)$, there exist at most $\eta(\epsilon)$ CAT(-1) points along $\gamma$ separating the projection $w$ of $y$ and the projection $z$ of $x$ onto $\gamma$ such that the balls of radii $\epsilon$ centered at these points are all disjoint from the geodesic $[x,y]$. Therefore, either $[z,w]$ contains at most $\eta(\epsilon)$ consecutive CAT(-1) points, so that
$$d(z,w)  \leq( \eta(\epsilon) +1) \cdot L,$$
or there exist more than $\eta(\epsilon)$ consecutive CAT(-1) points in $[z,w]$. In this case, let $c \in [z,w]$ denote the $(\eta(\epsilon)+1)$-th CAT(-1) point (starting from $z$ to $w$); then $[x,y]$ must intersect the ball $B(c,\epsilon)$ at some point $c'$. So, we get
$$d(x,c')+d(c',y)=d(x,y) < d(x,z)\leq d(x,c')+d(c',c)+d(c,z),$$
where the first inequality comes from the fact that $d(x,z)=d(x,\gamma)$. Thus
\begin{align*}
	d(c',y)&\leq d(c',c)+d(c,z) \\
	&\leq \epsilon+(\eta(\epsilon)+2)\cdot L.
\end{align*}
Finally, if $c''$ denotes the projection of $c'$ onto $\gamma$, since the projection onto $\gamma$ is 1-Lipschitz, we have
\begin{align*}
	d(z,w)&\leq d(z,c)+d(c,c'')+d(c'',w) \\
	&\leq (\eta(\epsilon)+2)\cdot L+d(c,c'')+d(c,y)\\
	&\leq 2(\eta(\epsilon)+2)\cdot L+3\epsilon.
\end{align*}
Consequently, the geodesic $\gamma$ is $B$-contracting where $B= 2(\eta(\epsilon)+2)\cdot L+3\epsilon $.
\end{proof}

\begin{remark}
As it was pointed out to us by Alexander Lytchak, a more geometric proof of Proposition \ref{prop2} is possible. Indeed, following the same lines, the combinatorial Gauss-Bonnet formula can be replaced with a geometric analogue proved by Alexandrov. We refer to \cite[Chapter V.3]{Alexandrov} for more information. 
\end{remark}

\begin{proof}[Proof of Theorem \ref{thm:main}.]
According to Proposition \ref{prop1}, there exists a geodesic ray $r$ and an increasing sequence $(t_n)$ of nonnegative reals tending to $+ \infty$ such that the point $r(t_n)$ has a CAT(-1) neighborhood with radius uniformly bounded away from zero for every $n \geq 0$ and such that the sequence $(t_{n+1}-t_n)$ is bounded. For every $n \geq 0$, let $g_n \in G$ be an isometry translating $r(t_n)$ into some fixed compact fundamental domain. Because $X$ is proper, we deduce from Arzel\`a-Ascoli theorem that the sequence of rays $(g_n \cdot r_n)$ subconverges to a geodesic line $\gamma$. Moreover, since the limit of a sequence of CAT(-1) points with CAT(-1) neighborhood of radii uniformly bounded away from zero must be a CAT(-1) point according to Fact \ref{fact}, we know that $\gamma$ passes through infinitely many CAT(-1) points in such a way that the distance between any two consecutive such points is uniformly bounded from above and from zero. According to Proposition \ref{prop2}, this geodesic must be contracting. The desired conclusion follows from Theorem \ref{thm:whenacylhyp}.
\end{proof}

\noindent
Let us conclude this section with an open question and a few examples. Define \emph{partially CAT(-1) groups} as groups acting geometrically on geodesically complete proper CAT(0) spaces containing points with CAT(-1) neighborhoods. It would be interesting to determine if partially CAT(-1) groups satisfy stronger hyperbolic properties than just being acylindrically hyperbolic. Notice that, in Section \ref{section:nonRH}, an example of a partially CAT(-1) group which is not relatively hyperbolic is given. A naive question is:

\begin{question}
Must an acylindrically hyperbolic CAT(0) group be partially CAT(-1)? For instance, when is a right-angled Artin group partially CAT(-1)?
\end{question}

\noindent
Now, let us indicate how to construct examples of partially CAT(-1) groups by gluing surfaces together.

\begin{ex}
Let $\Sigma_g$ be a compact orientable surface of genus $g \geq 2$. We think of $\Sigma_g$ as obtained from a hyperbolic regular right-angled $4g$-gon by identifying parallel sides in the usual way. Each side of this polygon defines a loop in $\Sigma_g$, which we refer to as a \emph{canonical loop}. Similarly, we think of the torus $\Sigma_1$ as obtained from a Euclidean square by identifying parallel sides in the usual way; and we refer to the image in $\Sigma_1$ of any side of our square as a \emph{canonical loop}. Up to rescaling our metrics, we may suppose without loss of generality that any canonical loop in any of our surfaces has a fixed length. Now, let $S$ be a space obtained by gluing compact orientable surfaces along canonical loops by isometries; let $\Sigma$ denote the underlying collection of surfaces. The universal cover $\widetilde{S}$ of $S$ is naturally a polygonal complex, 
%Moreover, if one adds a central vertex to each polygon which is not a square and if one links this new vertex to half of the vertices of the corresponding polygon, one obtain a square complex, 
which turns out to be CAT(0) provided that the graph underlying our graph of spaces is triangle-free. 
%By noticing that each of our central vertices are CAT(-1), 
It follows from Theorem \ref{thm:main} that the fundamental group $\pi_1(S)$ must be acylindrically hyperbolic if the collection $\Sigma$ contains at least one surface of genus at least two. 
\end{ex}

\section{Concluding remarks}\label{section:nonRH}

\begin{figure}
\begin{center}
\includegraphics[trim={0 0 0 0},clip,scale=0.35]{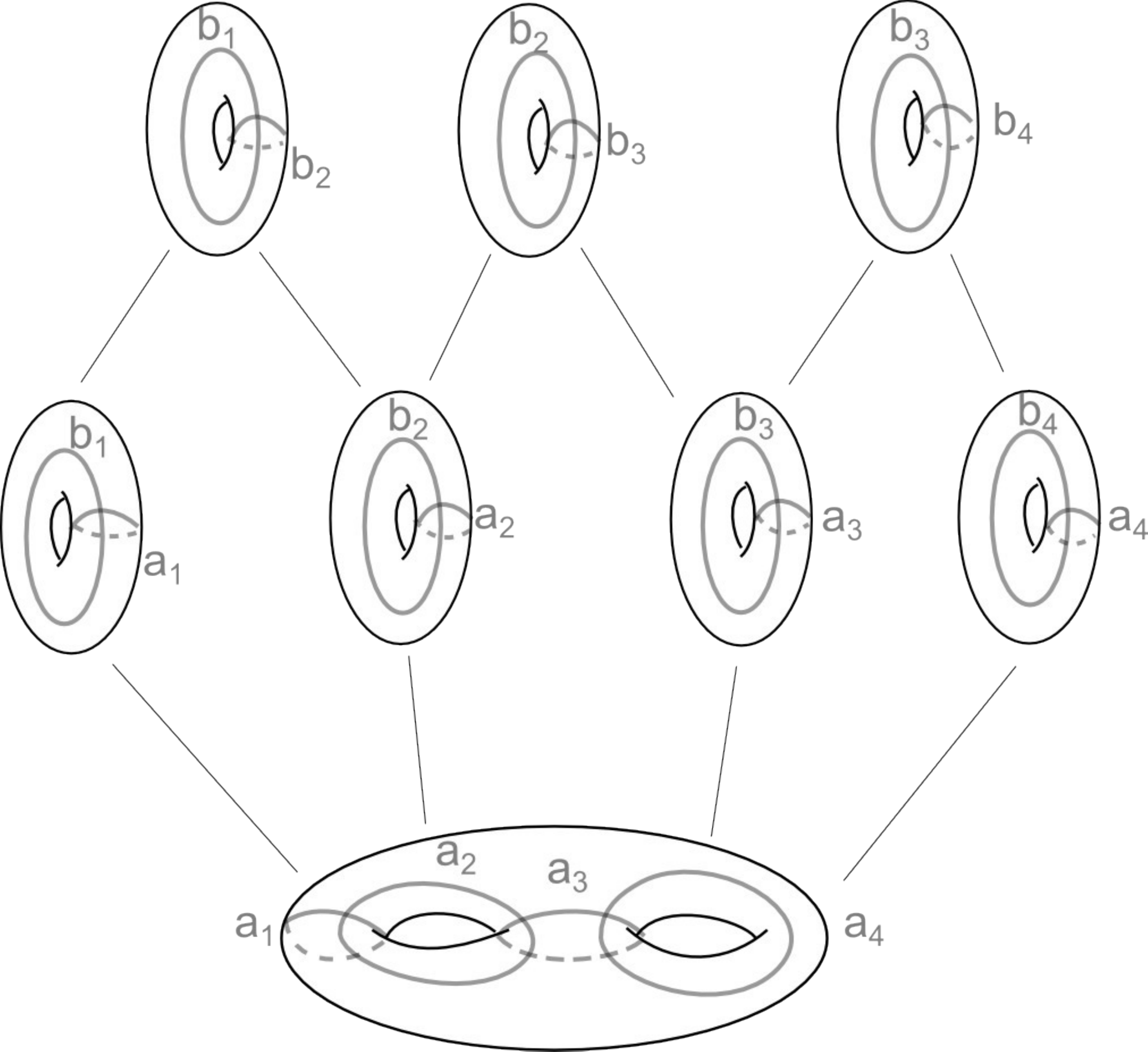}
\caption{A non relatively hyperbolic partially CAT(-1) group.}
\label{figure3}
\end{center}
\end{figure}
\noindent
As a conclusion of the article, we would to end with two remarks. The first one is that there exist plenty of partially CAT(-1) groups which are not relatively hyperbolic, showing that the conclusion of Theorem \ref{thm:main} cannot be strengthened in this direction. Let us construct an explicit example.

\medskip \noindent
Let $\Sigma$ denote the graph of spaces given by Figure \ref{figure3}. More explicitely, let $S$ be a compact orientable surface of genus two and $T_1,T_2,T_3,T_4, T_{12}, T_{23}, T_{34}$ seven tori. We denote by $a_1,a_2,a_3,a_4$ the canonical loops of $S$ (ordered by following a fixed cyclic order on the boundary of the polygon corresponding to $S$); by $a_i^i,b_i^i$ the canonical loops of the torus $T_i$; and by $b_i^{i,i+1}, b_{i+1}^{i,i+1}$ the canonical loops of the torus $T_{i,i+1}$. The space $\Sigma$ is obtained from these eight surfaces by gluing each $T_i$ to $S$ by identifying $a_i$ and $a_i^i$, and by gluing $T_{i,i+1}$ to $T_i$ and $T_{i+1}$ by identifying $b^{i,i+1}_i$ and $b^{i,i+1}_{i+1}$ respectively to $b_i^i$ and $b_{i+1}^i$. 

\medskip \noindent
 It follows from \cite[Proposition II.11.6]{MR1744486} that $\Sigma$ is locally CAT(0). Moreover, each subsurface is locally isometrically embedded, so that any point on the surface of genus two which does not belong to any torus admits a CAT(-1) neighborhood. Next, notice that the universal cover $\widetilde{\Sigma}$ of $\Sigma$ is geodesically complete, as a union of Euclidean and hyperbolic planes (which are convex in $\widetilde{\Sigma}$). Consequently, the fundamental group $G$ of $\Sigma$ is partially CAT(-1). 

\begin{fact}
The group $G$ is not relatively hyperbolic. 
\end{fact}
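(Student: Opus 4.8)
The plan is to argue by contradiction. Suppose $G$ is hyperbolic relative to a finite collection $\mathcal{P}$ of proper subgroups; I will show that this forces $G$ itself to lie inside a conjugate of some $P_0\in\mathcal{P}$, which is absurd. I will use two standard facts about a relatively hyperbolic pair $(G,\mathcal{P})$: (i) any subgroup isomorphic to $\mathbb{Z}^2$ is conjugate into some member of $\mathcal{P}$ --- indeed every infinite order element of $G$ is either loxodromic, with virtually cyclic centraliser, or parabolic, fixing a unique point of the boundary, so a copy of $\mathbb{Z}^2$ (all of whose centralisers equal $\mathbb{Z}^2$) must consist of parabolics with a common fixed point, hence lies in a maximal parabolic subgroup, i.e. a conjugate of a member of $\mathcal{P}$; see \cite{OsinAcyl} and the references therein; and (ii) the family $\mathcal{P}$ is \emph{almost malnormal}: for $P,P'\in\mathcal{P}$ and $g\in G$, the intersection $gPg^{-1}\cap P'$ is finite unless $P=P'$ and $g\in P$.

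Regard $G=\pi_1(\Sigma)$ as the fundamental group of the graph of groups supported by the finite connected underlying graph $\Gamma$ of $\Sigma$: its vertex groups are $\pi_1(S)$ (a genus-two surface group) together with the seven groups $\pi_1(T_1),\dots,\pi_1(T_4),\pi_1(T_{12}),\pi_1(T_{23}),\pi_1(T_{34})$, each isomorphic to $\mathbb{Z}^2$, and its edge groups are the infinite cyclic groups carried by the gluing loops. First I would locate all seven torus subgroups inside a single peripheral subgroup. By fact (i) each of them is conjugate into some member of $\mathcal{P}$; and along the chain $T_1-T_{12}-T_2-T_{23}-T_3-T_{34}-T_4$, two consecutive torus subgroups share an infinite cyclic edge group, so by fact (ii) the conjugates of peripheral subgroups containing two consecutive torus subgroups must coincide. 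An immediate induction then produces a single conjugate $P=g_0P_0g_0^{-1}$, with $P_0\in\mathcal{P}$, containing $\pi_1(T_1),\dots,\pi_1(T_4),\pi_1(T_{12}),\pi_1(T_{23}),\pi_1(T_{34})$. Next, for each $i$ the canonical loop $a_i$ of $S$ is identified with the canonical loop $a_i^i$ of $T_i$, so as an element of $G$ it lies in $\pi_1(T_i)\le P$; since $a_1,a_2,a_3,a_4$ generate $\pi_1(S)$, we conclude that $\pi_1(S)\le P$ as well. Hence the subgroup $\Pi\le G$ generated by all the vertex subgroups is contained in $P$.

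To conclude, choose a spanning tree of $\Gamma$; since $\Gamma$ has first Betti number $3$, the group $G$ is generated by $\Pi$ together with three stable letters $t_1,t_2,t_3$, one for each edge of $\Gamma$ outside the tree, and each $t_k$ conjugates the image in one vertex group of an infinite cyclic edge group onto its image in another vertex group. Both of these infinite cyclic subgroups lie in $\Pi\le P$, so $P\cap t_kPt_k^{-1}$ contains an element of infinite order; fact (ii) then forces $t_k\in P$ for $k=1,2,3$. Therefore $G=\langle\Pi,t_1,t_2,t_3\rangle\le P$, contradicting the fact that $P$, being a conjugate of a member of $\mathcal{P}$, is a proper subgroup of $G$.

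The only external input is fact (i) in the precise form used, which is classical. The real work lies in the bookkeeping of the two invocations of almost malnormality --- one must repeatedly pass between peripheral subgroups and their conjugates while tracking the conjugating elements --- together with the standard description of the fundamental group of a graph of groups as generated by the vertex groups plus one stable letter per non-tree edge, each such stable letter conjugating one copy of an edge group onto another. I expect this last combinatorial point, namely checking that the stable letters genuinely conjugate an infinite cyclic subgroup of $\Pi$ onto another infinite cyclic subgroup of $\Pi$, to be the step that must be stated most carefully.
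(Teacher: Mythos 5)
Your overall strategy is the same as the paper's (use almost malnormality to merge the peripheral conjugates containing the various $\mathbb{Z}^2$'s into a single one, then show this single conjugate must be all of $G$), but there is a genuine gap at exactly the point you flag as delicate, and I do not think it can be closed with facts (i) and (ii) alone. The underlying graph $\Gamma$ has first Betti number $3$, so whatever spanning tree you choose, three of the ten edges carry stable letters, and for such an edge the two adjacent vertex groups, realised as subgroups of $G$ via tree paths, do \emph{not} share the edge group: one contains a cyclic subgroup $C$ and the other contains $t_kCt_k^{-1}$. Almost malnormality then yields $P'=t_kPt_k^{-1}$ for the two peripheral conjugates involved, not $P'=P$. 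Concretely: if the tree contains the star around $S$, you get $\langle a_i,b_i\rangle\le P_i$ but the torus chain only gives $P_{i+1}=t_iP_it_i^{-1}$; if instead the tree contains the path $T_1-T_{12}-\cdots-T_4$, the seven torus groups do land in a single $P$, but then $\pi_1(T_i)=\langle b_i,s_ia_is_i^{-1}\rangle$ for $i=2,3,4$, so you obtain $a_1\in P$ and only $a_i\in s_i^{-1}Ps_i$ for the others, and the step ``$a_i\in\pi_1(T_i)\le P$ for all $i$, hence $\pi_1(S)\le P$'' fails. Your final step is then circular: to force $t_k\in P$ you need \emph{both} images of the relevant edge group to lie in $P$, which is precisely what closing the cycle would provide. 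Since $\pi_1(S)$ is hyperbolic rather than $\mathbb{Z}^2$, fact (i) says nothing about it, so nothing in your argument closes the three cycles.

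The paper's own proof sidesteps all of this by working with an explicit presentation of $G$ on the eight generators $a_1,\dots,a_4,b_1,\dots,b_4$ in which the relations $[b_j,b_{j+1}]=1$ hold on the nose and no stable letters appear; the chain $H_1=K_1=H_2=\cdots$ then closes immediately and the common peripheral subgroup contains a full generating set. Your van Kampen analysis is more careful than the paper's on this point (a torus glued to a connected space along two disjoint circles genuinely contributes a stable letter), but having introduced the stable letters you must actually deal with them, and as written you do not: either justify that the vertex groups can be embedded compatibly so that every adjacent pair shares its edge group (impossible, since $\Gamma$ is not a tree), or supply an additional ingredient that handles the surface piece and breaks the circularity.
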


\medskip \noindent
First of all, notice that $G$ admits the following presentation:
$$\left\langle \begin{array}{c} a_1,a_2,a_3,a_4 \\ b_1,b_2,b_3,b_4 \end{array} \left| \begin{array}{c} [a_1,a_2] [a_3,a_4]= [b_1,b_2]= [b_2, b_3]=[b_3,b_4]=1 \\ \text{$[ a_1,b_1 ] =[a_2,b_2]= [a_3,b_3]= [a_4,b_4]=1$} \end{array} \right.  \right\rangle$$
Suppose that our group $G$ is hyperbolic relatively to some finite collection of subgroups. As a consequence of \cite[Theorems 4.16 and 4.19]{OsinRelativeHyp}, every non cyclic free abelian subgroup of $G$ must be contained into some peripheral subgroup, so, for every $1 \leq i \leq 4$ (resp. $1 \leq j \leq 3$), there exists some peripheral subgroup $H_i$ (resp. $K_j$) containing both $a_i$ and $b_i$ (resp. $b_j$ and $b_{j+1}$). Notice that
$$b_i \in H_i \cap K_i \ \text{for every $1 \leq i \leq 4$, and} \ b_i \in K_i \cap K_{i+1} \ \text{for every $1 \leq i \leq 3$}.$$
Since the collection of peripheral subgroups must be malnormal according to \cite[Theorems 1.4 and 1.5]{OsinRelativeHyp}, it follows that $H_1= H_2=H_3= H_4= K_1=K_2=K_3$. Let $H$ denote this common peripheral subgroup. Since $H$ contains all the generators, necessarily $G=H$. This proves our fact. 

\medskip \noindent
Our second and last remark is the following. The rough idea of our main result is that a group which is nonpositively-curved everywhere and negatively-curved at at least one point must be acylindrically hyperbolic (or virtually cyclic). We expect that such a result holds in other contexts, by interpreting the expressions ``nonpositively-curved'' and ``negatively-curved'' in a different way. As an illustration, let us prove the following statement:

\begin{prop}\label{prop:cubical}
Let $G$ be a group acting essentially and geometrically on a CAT(0) cube complex $X$. Suppose that $X$ contains a \emph{CAT(-1) vertex}, ie., a vertex without induced cycles of length four in its link. Then $G$ is either acylindrically hyperbolic or virtually cyclic.
\end{prop}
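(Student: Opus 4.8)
The plan is to reduce Proposition \ref{prop:cubical} to Theorem \ref{thm:whenacylhyp}, exactly as in the proof of Theorem \ref{thm:main}, by exhibiting a contracting geodesic line in $X$. The difference is that the ``CAT(-1) points'' are now replaced by the CAT(-1) vertices, and I must take advantage of the combinatorial structure of CAT(0) cube complexes rather than of the continuous arguments used above. First I would observe that a CAT(-1) vertex has a genuine CAT(-1) neighborhood: if $v$ has no induced $4$-cycle in its link, then $\mathrm{link}(v)$ is a flag simplicial complex with no square, hence (being a flag complex) it contains no isometrically embedded circle of length $2\pi$ when each edge is given length $\pi/2$; equivalently, by the link condition and Gromov's criterion, a small metric ball around $v$ is CAT(-1). (One can fix a uniform radius, say the radius of the open star of $v$, since all cubes are unit cubes.) Thus $X$ is a geodesically complete proper CAT(0) space with at least one CAT(-1) point, and $G$ acts geometrically on it.

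The subtlety is that Theorem \ref{thm:main} is stated under a bare geometric action, and its proof via Proposition \ref{prop1} only needs cocompactness, properness and geodesic completeness — all of which hold here — so in principle Proposition \ref{prop:cubical} is an immediate corollary of Theorem \ref{thm:main}, and the hypothesis that the action is \emph{essential} is not even needed. I would say this explicitly: apply Proposition \ref{prop1} to obtain a geodesic ray through CAT(-1) points with consecutive distances and neighborhood radii uniformly controlled; pass to a limiting geodesic line $\gamma$ using properness and Fact \ref{fact}, as in the proof of Theorem \ref{thm:main}; invoke Proposition \ref{prop2} to conclude that $\gamma$ is contracting; and finally apply Theorem \ref{thm:whenacylhyp} to deduce that $G$ is virtually cyclic or acylindrically hyperbolic. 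The essentiality hypothesis can be kept in the statement merely because it is the standard setting for cube-complex dichotomies, or dropped.

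Alternatively — and this is the version I would actually favour writing, since it is more in the spirit of cube complexes and avoids Proposition \ref{prop2} entirely — one can produce the contracting line combinatorially. A CAT(-1) vertex $v$ is, in cubical language, a vertex whose hyperplanes pairwise cross ``minimally''; more usefully, the absence of an induced $4$-cycle in $\mathrm{link}(v)$ means that through $v$ there is a combinatorial geodesic (a concatenation of edges) along which no two distinct hyperplanes it crosses near $v$ are separated by a ``flat square'' configuration. Using that the $G$-orbit of $v$ is cobounded and that $X$ is geodesically complete, one builds a bi-infinite combinatorial geodesic $\ell$ meeting a $G$-orbit of CAT(-1) vertices with bounded gaps, and then shows $\ell$ is contracting by a hyperplane argument: any ball disjoint from $\ell$ projects to a bounded segment because the hyperplanes dual to $\ell$ that separate the ball from $\ell$ cannot all be pairwise crossing (an infinite pairwise-crossing family would force an infinite induced complete subgraph in the link of one of the CAT(-1) vertices, contradicting the no-square condition, which bounds clique size in the relevant links... ) — more precisely, one uses that a CAT(-1) vertex rules out the ``staircase'' patterns that make a combinatorial geodesic non-contracting, e.g. via Charney--Sultan or Genevois-type criteria for contracting combinatorial geodesics in CAT(0) cube complexes.

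The main obstacle is the second, purely cubical route: turning ``no induced $4$-cycle in the link of $v$'' into a workable statement about hyperplane interactions along a geodesic through $v$, and then propagating this local negative-curvature information along a $G$-translated family of such vertices to get a global contraction estimate. The clean way around this obstacle is precisely to not attempt it and instead route through the CAT(-1)-neighborhood observation and Propositions \ref{prop1}--\ref{prop2}, which already do all the work; the only genuine content left is the elementary verification that the link condition (no induced square) yields a CAT(-1) metric neighborhood of $v$, which follows from the flag-no-square link being CAT(1) with a strict inequality on small geodesics.
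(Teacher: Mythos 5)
Your primary route fails at its very first step. A CAT(-1) \emph{vertex} of a CAT(0) cube complex does \emph{not} admit a CAT(-1) metric neighborhood for the standard CAT(0) metric: as soon as $v$ lies in a square (i.e., its link contains an edge), every ball around $v$ contains a flat Euclidean corner of that square, and a nondegenerate flat triangle violates the CAT(-1) comparison inequality, since distances between interior points of a hyperbolic comparison triangle are strictly smaller than the Euclidean ones. The paper makes exactly this point in the paragraph preceding Proposition \ref{prop:cubical}: endowed with its usual CAT(0) metric, a CAT(0) cube complex cannot contain a point with a CAT(-1) neighborhood, so Theorem \ref{thm:main} \emph{cannot} apply. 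The flag-no-square condition yields a CAT(-1) space only after each Euclidean cube is replaced by a hyperbolic cube, and Gromov's criterion then requires \emph{every} vertex to satisfy the no-square condition, whereas here only one vertex is assumed to. So Proposition \ref{prop:cubical} is not a corollary of Theorem \ref{thm:main}, and your assertion that the essentiality hypothesis can be dropped is unsupported. (You also assert geodesic completeness of $X$ without justification; a CAT(0) cube complex need not be geodesically complete.)

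The paper's actual proof is a short reduction to rank rigidity for cube complexes: a CAT(-1) vertex forbids any Cartesian product decomposition of $X$ (in a product, vertex links are joins and hence contain induced $4$-cycles), so \cite[Theorem 6.3]{MR2827012} --- and this is precisely where essentiality is used --- produces a contracting isometry, after which Theorem \ref{thm:whenacylhyp} concludes. Your second, combinatorial route is closer in spirit to such hyperplane arguments, but it is only a sketch: the key step, namely converting the absence of an induced $4$-cycle in a single link into a global contraction estimate along a geodesic, is explicitly left open, and you yourself recommend abandoning it in favour of the first route. As written, neither route constitutes a proof.
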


\noindent
It is worth noticing that, endowed with its usual CAT(0) metric, a CAT(0) cube complex cannot contain a point admitting a CAT(-1) neighborhood, as each cube is isometric to a (flat) Euclidean cube. So Theorem \ref{thm:main} cannot apply. A naive attempt to get more hyperbolicity would be to identify each cube with a fixed hyperbolic cube, but the cube complex one obtains in that way may be no longer CAT(0). Nevertheless, in \cite[Section 4.2.C]{GromovHyp}, Gromov notices that, by endowing each cube of a given CAT(0) cube complex with the metric of a hyperbolic cube, the geodesic metric space thus obtained turns out to be CAT(-1) provided the vertices of our cube complex have no cycles of length four in their links. This observation justifies the terminology used in the statement of Proposition \ref{prop:cubical}. 

\begin{proof}[Proof of Proposition \ref{prop:cubical}.]
If $X$ contains a CAT(-1) vertex, then it cannot split as a Cartesian product of two cube complexes. Therefore, \cite[Theorem 6.3]{MR2827012} implies that $G$ contains a contracting isometry, say $g \in G$. The desired conclusion follows from Theorem~\ref{thm:whenacylhyp}. 
\end{proof}

\addcontentsline{toc}{section}{References}

\bibliographystyle{alpha}
\bibliography{BarelyCAT}

\def\polhk#1{\setbox0=\hbox{#1}{\ooalign{\hidewidth
  \lower1.5ex\hbox{`}\hidewidth\crcr\unhbox0}}}
\begin{thebibliography}{Gen17b}

\bibitem[AH16]{CubeSmallCancelAcylHyp}
G.~Arzhantseva and M.~Hagen.
\newblock Acylindrical hyperbolicity of cubical small-cancellation groups.
\newblock {\em arXiv:1603.05725}, 2016.

\bibitem[BB08]{BallmannBuyalo}
W.~Ballmann and S.~Buyalo.
\newblock Periodic rank one geodesics in hadamard spaces.
\newblock {\em Geometric and Probabilistic Structures in Dynamics}, 469 of
  Contemporary Math., 2008.

\bibitem[BF02]{BBboundedcohoMCG}
M.~Bestvina and K.~Fujiwara.
\newblock Bounded cohomology of subgroups of mapping class groups.
\newblock {\em Geometry and Topology}, 6:69--89, 2002.

\bibitem[BF10]{BestvinaFeighnOutFnHyp}
Mladen Bestvina and Mark Feighn.
\newblock A hyperbolic $\mathrm{Out}(\mathbb{F}_n)$-complex.
\newblock {\em Groups, Geometry, and Dynamics}, 4(1):31--58, 2010.

\bibitem[BH99]{MR1744486}
Martin~R. Bridson and Andr{\'e} Haefliger.
\newblock {\em Metric spaces of non-positive curvature}, volume 319 of {\em
  Grundlehren der Mathematischen Wissenschaften [Fundamental Principles of
  Mathematical Sciences]}.
\newblock Springer-Verlag, Berlin, 1999.

\bibitem[Bus10]{BuserGeometry}
Peter Buser.
\newblock {\em Geometry and spectra of compact {R}iemann surfaces}.
\newblock Springer Science \& Business Media, 2010.

\bibitem[CM16]{NoteAcylHyp}
I.~Chatterji and A.~Martin.
\newblock A note on the acylindrical hyperbolicity of groups acting on
  \rm{CAT}(0) cube complexes.
\newblock {\em arXiv:1610.06864}, 2016.

\bibitem[CS11]{MR2827012}
Pierre-Emmanuel Caprace and Michah Sageev.
\newblock Rank rigidity for {CAT}(0) cube complexes.
\newblock {\em Geom. Funct. Anal.}, 21(4):851--891, 2011.

\bibitem[CS15]{MR3339446}
Ruth Charney and Harold Sultan.
\newblock Contracting boundaries of {$\rm CAT(0)$} spaces.
\newblock {\em J. Topol.}, 8(1):93--117, 2015.

\bibitem[CW16]{CalvezWiestArtinAcylHyp}
M.~Calvez and B.~Wiest.
\newblock Acylindrical hyperbolicity and {A}rtin-{T}its groups of spherical
  type.
\newblock {\em arXiv:1606.07778}, 2016.

\bibitem[DGO17]{DGO}
Fran{\c{c}}ois Dahmani, Vincent Guirardel, and Denis Osin.
\newblock {\em Hyperbolically embedded subgroups and rotating families in
  groups acting on hyperbolic spaces}, volume 245.
\newblock American Mathematical Society, 2017.

\bibitem[Gen16a]{coningoff}
A.~Genevois.
\newblock Coning-off \rm{CAT(0)} cube complexes.
\newblock {\em arXiv:1603.06513}, 2016.

\bibitem[Gen16b]{article3}
A.~Genevois.
\newblock Contracting isometries of \rm{CAT}(0) cube complexes and
  acylindricaly hyperbolicity of diagram groups.
\newblock {\em arXiv:1610.07791}, 2016.

\bibitem[Gen17a]{MoiGraphBraid}
A.~Genevois.
\newblock Algebraic characterisations of negatively-curved special groups and
  applications to graph braid groups.
\newblock {\em arXiv:1709.01258}, 2017.

\bibitem[Gen17b]{MoiHypCube}
A.~Genevois.
\newblock Hyperbolicities in \rm{CAT}(0) cube complexes.
\newblock {\em arXiv:1709.08843}, 2017.

\bibitem[Gro87]{GromovHyp}
M.~Gromov.
\newblock Hyperbolic groups.
\newblock {\em Essays in group theory}, 8(75-263):2, 1987.

\bibitem[Gro93]{GromovAsymptotic}
M.~Gromov.
\newblock Asymptotic invariants of infinite groups.
\newblock In {\em Geometric group theory, {V}ol.\ 2 ({S}ussex, 1991)}, volume
  182 of {\em London Math. Soc. Lecture Note Ser.}, pages 1--295. Cambridge
  Univ. Press, Cambridge, 1993.

\bibitem[GS14]{C(7)}
D.~Gruber and A.~Sisto.
\newblock Infinitely presented graphical small cancellation groups are
  acylindrically hyperbolic.
\newblock {\em arXiv:1408.4488}, 2014.

\bibitem[Hru10]{HruskaRH}
G~Christopher Hruska.
\newblock Relative hyperbolicity and relative quasiconvexity for countable
  groups.
\newblock {\em Algebraic \& Geometric Topology}, 10(3):1807--1856, 2010.

\bibitem[KL95]{KapovichLeebCones}
M.~Kapovich and B.~Leeb.
\newblock On asymptotic cones and quasi-isometry classes of fundamental groups
  of 3-manifolds.
\newblock {\em Geometric and Functional Analysis}, 5(3):582--603, 1995.

\bibitem[Kut05]{Alexandrov}
S.~Kutateladze.
\newblock {\em {A.D.} {A}lexandrov, Selected Works Part {II}: Intrinsic
  Geometry of Convex Surfaces}.
\newblock CRC Press, 2005.

\bibitem[Lon15]{LonjouCremonaAcylHyp}
A.~Lonjou.
\newblock Non simplicit\'e du groupe de cremona sur tout corps.
\newblock {\em arXiv:1503.03731}, 2015.

\bibitem[LS77]{LS}
C.~Lyndon and P.~E. Schupp.
\newblock {\em Combinatorial group theory}.
\newblock Ergebnisse der Mathematik und ihrer Grenzgebiete 89. Springer,
  Berlin, 1977.

\bibitem[MO15]{MinasyanOsin}
Ashot Minasyan and Denis Osin.
\newblock Acylindrical hyperbolicity of groups acting on trees.
\newblock {\em Mathematische Annalen}, 362(3):1055--1105, 2015.

\bibitem[MW02]{McCammondWise}
J.~McCammond and D.~Wise.
\newblock Fans and ladders in small cancellation theory.
\newblock {\em Proc. London Math. Soc.}, 84(3):599--644, 2002.

\bibitem[Osi06]{OsinRelativeHyp}
D.~Osin.
\newblock Relatively hyperbolic groups: intrinsic geometry, algebraic
  properties, and algorithmic problems.
\newblock {\em Mem. Amer. Math. Soc.}, 179(843):vi--100, 2006.

\bibitem[Osi15]{DeficiencyAcylHyp}
D.~Osin.
\newblock On acylindrical hyperbolicity of groups with positive first
  $\ell^2$-{B}etti number.
\newblock {\em Bulletin of the London Mathematical Society}, 47(5):725, 2015.

\bibitem[Osi16]{OsinAcyl}
D.~Osin.
\newblock Acylindrically hyperbolic groups.
\newblock {\em Trans. Amer. Math. Soc.}, 368:851--888, 2016.

\bibitem[Sis16a]{arXiv:1112.2666}
A.~Sisto.
\newblock Contracting elements and random walks.
\newblock {\em Journal f{\"u}r die reine und angewandte Mathematik (Crelles
  Journal)}, 2016.

\bibitem[Sis16b]{Sistohypembed}
A.~Sisto.
\newblock Quasi-convexity of hyperbolically embedded subgroups.
\newblock {\em Mathematische Zeitschrift}, 283(3):649--658, 2016.

\end{thebibliography}

\Address

\end{document}